\newtheorem{teo}{Theorem}[section]
\newtheorem{lemma}[teo]{Lemma}
\newtheorem{prop}[teo]{Proposition}
\newtheorem{cor}[teo]{Corollary} 
\theoremstyle{definition}
\newtheorem{defin}[teo]{Definition}
\newtheorem{rem}[teo]{Remark}
\newcommand{\R}{\mathbb{R}}
\newcommand{\C}{\mathbb{C}}
\newcommand{\Z}{\mathbb{Z}}
\newcommand{\N}{\mathbb{N}}
\newcommand{\la}{\lambda}
\newcommand{\vp}{\varphi}
\newcommand{\ve}{\varepsilon}
\newcommand{\bs}{\boldsymbol}
\newcommand{\beq}{\begin{equation}}
\newcommand{\eeq}{\end{equation}}
\newcommand{\D}{\mathscr{D}}
\newcommand{\MM}{\mathbf{M}}
\newcommand{\jj}{\mathbf{j}}
\newcommand{\EE}{\boldsymbol{E}}
\newcommand{\HHH}{\boldsymbol{H}}
\newcommand{\curl}{\nabla \times}
\newcommand{\A}{\mathbf{A}}
\title{Spinning Q--balls in Abelian Gauge Theories with \\ positive potentials: existence and non existence}
\author{{\sc Dimitri Mugnai}\\Dipartimento di Matematica e
Informatica\\Universit\`a di Perugia\\Via Vanvitelli 1, 06123
Perugia - Italy\\ tel. +39 075 5855043, fax. +39 075 5855024,\\
e-mail: dimitri.mugnai@unipg.it\\
\newline \\
{\sc Matteo Rinaldi}\\Department of Mathematical Sciences\\Carnegie Mellon University
\\5000 Forbes Avenue, Pittsburgh, PA 15213, USA\\ tel. +1 412 2688412,\\ e-mail: matteor@andrew.cmu.edu}
\date{}
\begin{document}

\maketitle
\begin{abstract}
We study the existence of cylindrically symmetric
electro-magneto-static solitary waves for a system of a nonlinear
Klein--Gordon equation coupled with Maxwell's equations in presence
of a positive mass and of a nonnegative nonlinear potential. Nonexistence results are provided as well.
\end{abstract}

Keywords: Klein--Gordon--Maxwell system, spinning $Q$--balls, nonnegative potential, existence and non existence

2000AMS Subject Classification: 35J50, 81T13, 35Q40

\section{Introduction, motivations and results}

In recent years great attention has being paid to some classes
of systems of partial differential equations that provide a model
for the interaction of matter with electromagnetic field. Such
theories are known in literature as Abelian Gauge Theories, and in
this framework a crucial r\^ole is played by systems whose field
equation is the Klein--Gordon's one. In particular, we recall the
papers \cite{new2}, \cite{12}, \cite{bf}, \cite{BF}, \cite{11},
\cite{bfarc}, \cite{bh}, \cite{tdnonex}, \cite{teadim}, \cite{egs},
\cite{km}, \cite{long}, \cite{ms}, \cite{mug}, \cite{dm2},
\cite{new3} and \cite{vw}, where existence or non existence results
are proved in the whole physical space for systems of Klein-Gordon-Maxwell type.

Here we are interested in a particular class of solutions,
consisting in the so called \emph{solitary waves}, i.e. solutions of
a field equation whose energy travels as a localized packet. This
kind of solutions plays an important r\^ole in these theories
because of their relationship with \emph{solitons}. ``Soliton'' is
the name by which solitary waves are known when they exhibit some
strong form of stability; they appear in many situations of
mathematical physics, such as classical and quantum field theory,
nonlinear optics, fluid mechanics and plasma physics (for example
see \cite{dodd}, \cite{felsa} and \cite{raja}). Therefore, the first
step to prove the existence of solitons is to prove the existence of
solitary waves, as we will do.

Our starting point is the following system, obtained by the
interaction of a Klein--Gordon field with Maxwell's equations, which
is, therefore, a model for electrodynamics:
\begin{equation}\label{evola0}
\begin{cases}
&(\partial_t+iq\phi)^2\psi-(\nabla -iq{\bf A})^2\psi +W'(\psi)=0,\\
& {\rm div}(\partial_t {\bf A}+\nabla \phi)=q\left({\rm Im} \frac{\partial_t \psi}{\psi}+q\phi\right)|\psi|^2,\\
& \nabla \times (\nabla \times {\bf A})+\partial_t(\partial_t {\bf A}+\nabla \phi)=q\left({\rm Im} \frac{\partial_t \psi}{\psi}-q{\bf A}\right)|\psi|^2.
\end{cases}
\end{equation}
Here $\psi:\R^3\times \R \to \C$, $\phi:\R^3\to \R$ and ${\bf A}:\R^3\times \R\to \R^3$, see \cite{bf} for the derivation of the general system and for a detailed description of the physical
meaning of the unknowns.

We are interested in standing waves solutions of system \eqref{evola0}, under the assumption that $W$ possesses some good
invariants (necessary to be considered in Abelian Gauge Theories),
typically some conditions of the form
\[
W(e^{i\alpha}u)=W(u) \quad \mbox{ and }\quad
(W')(e^{i\alpha}u)=e^{i\alpha}W'(u)
\]
for any function $u$ and any $\alpha\in \R$. Thus we look for solutions having the
special form
\begin{align}
\psi(x,t)=u(x)e^{iS(x,t)}, u : \R^3 \to \R&, \ S(x,t)= S_0(x) - \omega t \in \R, \ \omega \in \R, \label{29a} \\
\partial_t \A &= 0, \ \partial_t \phi = 0. \label{30a}
\end{align}
In this way the previous system reads as
\begin{equation}\label{evola}\left\{
\begin{aligned}
&-\Delta u+|\nabla S-q{\bf
A}|^2u-\Big(\frac{\partial S}{\partial t}+q\phi\Big)^2u+W'(u)=0,\\
&\frac{\partial}{\partial t}\Big[\Big(\frac{\partial S}{\partial
t}+q\phi\Big)u^2\Big]-{\rm div}
[(\nabla S-q{\bf A})u^2]=0,\\
&{\rm div}\Big(\frac{\partial {\bf A}}{\partial t}+ \nabla
\phi\Big)= q \Big(\frac{\partial S}{\partial
t}+q\phi\Big)u^2,\\
&\nabla \times (\nabla \times {\bf A})+\frac{\partial}{\partial
t}\Big(\frac{\partial {\bf A}}{\partial t}+\nabla \phi\Big)=q(\nabla
S-q{\bf A})u^2,
\end{aligned}\right.
\end{equation}
where the equations are the matter equation, the charge continuity
equation, the Gauss equation and the Amp\`ere equation,
respectively.

Three different types of finite energy, stationary nontrivial
solutions can be considered:
\begin{itemize}
\item electrostatic solutions: $\A =0, \phi \neq 0$;
\item magnetostatic solutions: $\A \neq 0, \phi=0$;
\item electro-magneto-static solutions: $\A \neq 0, \phi \neq 0$.
\end{itemize}
Under suitable assumptions, all these types of solutions may exist.

Existence and nonexistence of electrostatic solutions for system
\eqref{evola} have been proved under different assumptions on $W$:
in \cite{tdnonex} and \cite{teadim} the following potential (or more general ones) has been considered:
\[
W(s) = \frac{1}{2}s^2 -
\frac{s^p}{p}, \ s \geq 0.
\]
In \cite{bf} the case $4<p<6$, in \cite{teadim} the
case $2<p<6$ and in \cite{tdnonex} the remaining cases are
studied.

In \cite{12} and \cite{dm2} the existence of electrostatic solutions
has been studied for the first time when the potential $W$ is {\em
nonnegative}. In particular the existence of radially symmetric,
electrostatic solutions has been analyzed in both papers, and it
turns out that all these solutions have zero angular momentum.

Here we are interested in electro-magneto-static solutions when $W
\geq 0$; in particular, we shall study the existence of vortices,
which are solutions with non vanishing angular momentum, namely
solutions with $S_0(x)=l\theta (x)$ - $\theta$ is the polar function in cylindrical coordinates -, i.e. of the form
\begin{equation} \label{38a}
\psi (t,x) = u(x)e^{i(l\theta (x)-\omega t)}, \ l \in
\mathbb{Z}\setminus \{ 0 \},
\end{equation}
and we will see that the angular momentum $\MM_m$ of the matter
field of a vortex does not vanish (see Remark \ref{n4}); this fact
justifies the name ``vortex''. These kinds of solutions are also known as \textit{spinning
Q--balls}; in this regard we recall the pioneering paper of Rosen
\cite{rosen} and of Coleman \cite{coleman}. Coleman was the first
to use the name \emph{Q--ball}, referring to
spherically symmetric solutions.Vortices in the nonlinear
Klein--Gordon--Maxwell equations with a {\em nonnegative} nonlinear term
$W(s)$ with $W(0) = 0$ are also considered in Physics literature
with the name of \textit{gauged spinning Q-balls}, the name balls being used even if they do not
exhibit a spherical symmetry, as in the case treated in this paper. More precisely, 
spinning axially symmetric Q-balls have been constructed by Volkov and Wohnert \cite{vw}, and have already been analysed also in \cite{afkp}, \cite{bh}, \cite{kkl} and \cite{kkls}. For a review of the problem of constructing classical field theory
solutions describing stationary vortex rings we refer to \cite{rv}, where applications in relativistic field theories and non-linear optics is presented.

However, in most of the previous considerations the existence of such solutions is discussed only qualitatively, so that almost no solutions
of this type are explicitly known. Indeed, the mathematical existence of spinning $Q$--balls was given for the first time in \cite{BF}, though some numerical results are known since \cite{LSWW}. Therefore, this paper is a contribution to an existence theory which is still at the very beginning.

By \eqref{38a}, system \eqref{evola} becomes
\begin{align}
&-\Delta u + \left[ |l\nabla \theta - q\A|^2 - (\omega - q \phi)^2 \right ]u + W'(u) = 0, \label{39a} \\
&-\Delta \phi = q(\omega - q\phi)u^2, \label{40a} \\
&\curl (\curl \A) = q(l\nabla \theta - q\A)u^2, \label{41a}
\end{align}
which is the Klein--Gordon--Maxwell system we have investigated. Moreover, though the system \eqref{39a}--\eqref{40a}--\eqref{41a} was
obtained by means of considerations on gauge invariance of $W$, from
a mathematical point of view we can also replace \eqref{39a} with
\[
-\Delta u + \left[ |l\nabla \theta - q\A|^2 - (\omega - q \phi)^2
\right ]u + W_u(x,u) = 0,
\]
i.e. we could let $W$ depend on the $x$--variable. More precisely,
in order to use our functional approach, we let $W$ depend on
$(\sqrt{x_1^2+x_2^2},x_3)$, but we do not require any positivity far
from 0, in contrast to the usual Ambrosetti--Rabinowitz condition.
We think that this fact is quite interesting, both from a
mathematical and a physical point of view: for example, it may
happen that the potential is inactive in some cylinder, or, even
more interestingly, out of a cylinder, as it happens where strong
magnetic potential are present in linear accelerators.

According to what just said, in the second section we will show a
new existence result for system
\eqref{39a}--\eqref{40a}--\eqref{41a} under general assumptions on
the nonnegative potential $W$. We were inspired by the approach of
\cite{BF}, and for this reason, the functional structure is the same
one of that article. However, our hypotheses on $W$ imply, in
particular, that the potential $W(s)$ might be 0 for values of $s$
different from 0, in contrast to all previous results, where the
potential $W$ was assumed to lye above a parabola. This corresponds
to the situation in which, for values of the unknown different from
0, there is no interaction among particles (see \cite{teadim},
\cite{dm2}).

Moreover, even more interestingly, we show the existence of solutions
for {\em all} possible values of the charge $q$. We believe this is
a very nice result, since for the first time in literature from the
seminal paper by Coleman \cite{coleman}, in which the charge was
supposed small, as in all the subsequent papers in our bibliography,
we give existence results {\em for all} values of the charge.

In conclusion, though our assumptions are weaker, our results are
stronger than those found so far.

\begin{rem}
If we consider the electrostatic case, i.e. $-\Delta u+W'(u)=0$,
calling ``rest mass'' of the particle $u$ the quantity
$$
\int_{\R^3} W(u)\,dx,
$$
see \cite{bfarc}, our assumptions on $W$ imply that we are dealing {\it a priori}
with systems for particles having {\em positive mass}, which is, of
course, the physical interesting case.
\end{rem}

Entering into details, we shall study system
\eqref{39a}--\eqref{40a}--\eqref{41a} under the following hypothesis
on the potential $W$:
\begin{itemize}
\item [$W1$)] $W(s) \geq 0$ for all $s \geq 0$;
\item [$W2)$] $W$ is of class $C^2$ with $W(0)=W'(0)=0, W''(0)=m^2 >
0$;
\item [$W3)$] setting
\begin{equation} \label{n1a} W(s)=\frac{m^2}{2}s^2 + N(s), \end{equation} we assume that
there exist positive constants $c_1, c_2, p,\ell,$ with $2<\ell \leq p <
6$, such that for all $s \geq 0$ there holds
\[
|N'(s)| \leq c_1 s^{\ell-1} + c_2s^{p-1}.
\]
\end{itemize}
Moreover, though we are interested in positive solutions, it is
convenient to extend $W$ to all of $\R$ setting
\[
W(s) = W(-s)  \mbox{ for every } s < 0.
\]

The system \eqref{39a}--\eqref{40a}--\eqref{41a} was introduced in
\cite{BF} assuming $W1),\,W2),\, W3)$ and the fundamental
requirement
\begin{equation}\label{loro}
\inf_{s>0}\left( \frac{W(s)}{\frac{m^2}{2}s^2} \right) < 1.
\end{equation}
We immediately see that assumption $W3)$ plus \eqref{loro} is
equivalent to require that there exists $s_0 > 0$ such that $N(s_0)
< 0$, the first step in the classical ``Berestycki--Lions''
approach. In this paper we will use an hypothesis different from
\eqref{loro}, which will let us prove our main result without any
restriction on the charge $q$, in contrast to all
previous results.

Indeed, we will
assume
\begin{itemize}
\item [$W4$)] there exist $\tau>2$,
\[
D\geq \begin{cases}
3(1+l^2)^{\frac{\tau -2}{2}}2^{3\tau/2-5}m^{4-\tau} & \mbox{ if }q\leq1,\\
3(1+l^2)^{\frac{\tau -2}{2}}2^{3\tau/2-5}m^{4-\tau}q^{3(\tau-2)} & \mbox{ if }q>1
\end{cases}
\]
and $\ve_0>0$ with $\ve_0=\ve_0(q)$ if $q>1$,
such that
\[
N(s)\leq -D|s|^\tau \quad \mbox{ for all }s\in[0,\ve_0].
\]
\end{itemize}

It is clear that functions of the type $N(s)=|s|^p-|s|^q$, $2<q<p$,
satisfy $W4)$. Of course, $W4)$ implies that  there exists $s_0 > 0$ such that $N(s_0)
< 0$, but $W4)$ permits to prove existence results for any $q>0$ and suitable potentials $W$, see Theorem \ref{main}.

\begin{rem}
We emphasize the fact that $W1)$ and $W4)$ together imply that $D$ cannot be as large as desired, since the condition $W\geq0$ forces $D$ to depend on $\ve_0$ and $q$. 
However, we remark that the parameter $\ve_0$ is allowed to depend on $q$ only when $q > 1$, hence $D$ does not depend on the charge $q$ when $q \leq1$, but it depends only on $m$ and $l$.
As a consequence, the class of admissible potentials does {\em not} depend on the value of the charge $q$, whenever $q \leq 1$.
\end{rem}

As usual, for physical reasons, we look for solutions having finite
energy, i.e. $(u,\phi, \A)\in H^1\times \mathcal{D}^1 \times \left(
\mathcal{D}^1  \right)^3$, where $H^1=H^1(\R^3)$ is the usual
Sobolev space, and $\mathcal{D}^1=\mathcal{D}^1(\R^3)$ is the
completion of ${\mathscr D}=C^\infty_C(\R^{3})$ with respect to the
norm $\|u\|_{\mathcal{D}^1}^2:=\int_{\R^3}|\nabla u|^{2}\,dx$ (see
Section \ref{functional} for the precise functional setting). \\

Before giving our main result, we remark that, as in
\cite{BF}, the parameter $\omega$ is an unknown of the problem.
\begin{teo} \label{main}
Assume $W1)$, $W2)$, $W3)$, $W4)$, let $l \in \mathbb{Z}$ and $q\geq
0$. Then system \eqref{39a}--\eqref{40a}--\eqref{41a} admits a
finite energy solution in the sense of distributions
$(u,\omega,\phi,\A), u\neq 0, \omega > 0$ such that
\begin{itemize}
\item the maps $u, \phi$ depend only on the variables
$r=\sqrt{x_1^2+x_2^2}$ and $x_3$;
\item
\[
\int_{\R^3}\frac{u^2}{r^2}\,dx\in \R;
\]
\item the magnetic potential $\A$ has the following form:
\begin{equation} \label{n3a}
\A = a(r,x_3)\nabla \theta = a(r,x_3) \left( \frac{x_2}{r^2}\mathbf{e_1} - \frac{x_1}{r^2}\mathbf{e_2} \right).
\end{equation}
\end{itemize}
If $q=0$, then $\phi =0,\, \A=\bs{0}$. If $q>0$, then $\phi \neq 0$.
Moreover, $\A \neq \bs{0}$ if and only if $l \neq 0$.
\end{teo}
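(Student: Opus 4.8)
The plan is to prove Theorem \ref{main} by a constrained variational method in the class of cylindrically symmetric fields, adapting the scheme of \cite{BF}. First I would impose the ansatz \eqref{29a}--\eqref{30a} together with $S_0=l\theta$ and the form \eqref{n3a}, so that $l\nabla\theta-q\A=(l-qa)\nabla\theta$ and $|l\nabla\theta-q\A|^2=(l-qa)^2/r^2$; this collapses \eqref{39a}--\eqref{41a} into a system for the three scalar unknowns $u,\phi,a$ depending only on $(r,x_3)$, to be sought in a Hilbert space that incorporates the weighted term $\int_{\R^3}u^2/r^2\,dx$. By the principle of symmetric criticality it then suffices to find critical points of the action restricted to this symmetric class. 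The action is strongly indefinite because of the sign of the terms $-\tfrac12\int|\nabla\phi|^2$ and $-\tfrac12\int(\omega-q\phi)^2u^2$.

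Next I would remove the indefiniteness by solving the two Maxwell equations for the gauge fields in terms of $u$. For fixed $u$ and $\omega$, equation \eqref{40a} is linear and coercive in $\phi$ (the zero-order term $q^2u^2$ gives coercivity), while \eqref{41a} is linear in $a$ with a nonnegative zero-order term, so Lax--Milgram yields unique solutions $\phi=\phi_{u,\omega}$ and $a=a_u$; a maximum-principle argument provides the bounds $0\le q\phi_{u,\omega}\le\omega$, and an energy estimate controls $a_u$. Substituting back produces a reduced functional $J$ depending only on $u$ (with $\omega$ appearing as a parameter), of class $C^1$ and no longer strongly indefinite. Since $\omega$ is itself an unknown, I would realize it as a Lagrange multiplier: minimize the reduced energy subject to fixing the electric charge $\int(\omega-q\phi)u^2$, so that any minimizer automatically solves the full system for some $\omega$, with $\omega>0$ following from the charge constraint together with $u\neq0$.

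The crux is the existence of a nontrivial minimizer, i.e. recovering compactness on the unbounded domain $\R^3$ and excluding $u\equiv0$. Cylindrical symmetry alone does not compactify the $x_3$--direction, so I would run a concentration--compactness analysis on the minimizing sequence, ruling out vanishing and dichotomy. This is exactly where hypothesis $W4)$ with $D\geq D_0$ enters and replaces the small-charge assumption of \cite{BF}: testing with a suitably scaled profile and using $N(s)\le -D|s|^\tau$ on $[0,\ve_0]$, one shows that for $D$ large enough the minimization level is strictly negative (equivalently, strictly below the threshold at which mass could split off or escape), which both forces the minimizer to be nontrivial and yields the strict subadditivity needed for compactness. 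I expect this step---quantifying how large $D_0$ must be so that the negative contribution of $N$ beats simultaneously the mass term $\tfrac{m^2}{2}\int u^2$ and the electromagnetic self-energy carried by $\phi_{u,\omega}$ and $a_u$, uniformly in $q$---to be the main obstacle.

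Finally, the qualitative assertions follow by inspecting the reduced equations. Finiteness of $\int_{\R^3}u^2/r^2$ is built into the energy space. If $q=0$, then \eqref{40a} and \eqref{41a} become homogeneous, and finite energy forces $\phi=0$ and $\A=\bs{0}$. If $q>0$, the source $q\omega u^2$ in \eqref{40a} is nontrivial (since $\omega>0$ and $u\neq0$), so $\phi\neq0$, and in fact $\phi>0$ by the maximum principle. For the last claim, when $l=0$ equation \eqref{41a} reduces to $\curl(\curl\A)+q^2u^2\A=\bs{0}$, whose only finite-energy solution is $\A=\bs{0}$; when $l\neq0$ the source $ql(\nabla\theta)u^2$ does not vanish, so $a_u$, and hence $\A$, cannot be identically zero.
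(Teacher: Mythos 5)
Your overall architecture --- cylindrical reduction, solving Maxwell's equations to remove the indefiniteness, fixing the charge so that $\omega$ appears as a Lagrange multiplier, and using $W4)$ with $D$ large to push the minimization level below a threshold --- tracks the paper closely (the paper solves only the $\phi$-equation and keeps $\A$ as a variable on the manifold $V=\hat H^1_\sharp\times\mathcal{A}$, whereas you also solve the $\A$-equation by Lax--Milgram; that variant is viable). But your compactness step has a genuine gap. First, the level can never be ``strictly negative'': the reduced energy $E_{\sigma,q}$ in \eqref{92} is a sum of nonnegative terms, and in fact Lemma \ref{inf0} shows $\inf_V E_{\sigma,q}>0$. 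The correct threshold is $m\sigma$, coming from $\frac{m^2}{2}t+\frac{\sigma^2}{2t}\ge m\sigma$, and Lemma \ref{lemma17} shows that $W4)$ with $D\ge D_0$ forces $\inf_V E_{\sigma,q}<m\sigma$. Being below this threshold rules out vanishing (Lemma \ref{prop20}), but it does \emph{not} ``yield the strict subadditivity needed for compactness'': those are unrelated statements, and nothing in your sketch excludes dichotomy. The paper never excludes dichotomy. Instead, after translating along the $x_3$-axis and using the Esteban--Lions compactness lemma to obtain a nonzero weak limit $(u_0,\A_0)$ (Proposition \ref{prop22}), it passes to the limit in the Euler--Lagrange equations with $\omega_n=\sigma/K_q(u_n)\to\omega_0>0$, and accepts that the limit is a critical point of $E_{\sigma_0,q}$ for a \emph{possibly different} charge $\sigma_0=\omega_0K_q(u_0)$ (Proposition \ref{prop23}). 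Since Theorem \ref{main} does not prescribe the charge, this charge-shifting trick is exactly what makes a fixed-charge minimizer --- and hence any subadditivity argument --- unnecessary. If you insist on a minimizer for the prescribed $\sigma$, you owe a proof of strict subadditivity for a functional containing the nonlocal term $\sigma^2/(2K_q(u))$ and the curl term, which neither you nor the paper provides.

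Second, the conclusion of Theorem \ref{main} is a solution \emph{in the sense of distributions}, and your proposal never confronts the singular set $\Sigma=\{r=0\}$. For $l\neq0$ the test space $\mathscr{D}$ is not contained in the weighted space $\hat H^1$, so a critical point of the (reduced) functional a priori satisfies the equation only against test functions controlled by the weight $l^2/r^2$; upgrading this to \eqref{60}--\eqref{62} is a removable-singularity statement that the paper proves separately (Theorem \ref{th10}), via cutoffs $\chi_n$ at scale $1/n$ around the axis, monotone convergence for the term $\int l^2|\nabla\theta|^2u_0\vp_n\,dx$, and $L^{3}$--$L^{3/2}$ estimates. Relatedly, your appeal to the principle of symmetric criticality is too quick for the magnetic potential: the fields $\A=a(r,x_3)\nabla\theta$ are \emph{not} the fixed-point set of the obvious rotation action on vector fields (invariant fields also carry radial and vertical components), so one must either work with the pseudo-vector action under reflections or, as the paper and \cite{BF} do, verify directly that $V$ is a natural constraint and that constrained Palais--Smale sequences are Palais--Smale sequences for the free functional (Proposition \ref{prop21}). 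These are not cosmetic points; together with the compactness mechanism above, they are where most of the technical content of the proof lies.
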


\begin{rem} 
By definition, the angular momentum is the quantity which
is preserved by virtue of the invariance under space rotations of
the Lagrangian with respect to the origin. Using the gauge invariant
variables, we get:
\[
\MM = \MM_m + \MM_f,
\]
where
\[
\MM_m = \int_{\R^3}{\biggl[ -x \times (\nabla u \partial_t
u) + x \times \frac{\rho \jj}{q^2u^2} \biggr] \ dx}
\]
and
\[
\MM_f = \int_{\R^3}{x \times (\EE \times \HHH) \ dx}.
\]
Here $\MM_m$ refers to the ``matter field'' and $\MM_f$ to the
``electromagnetic field'', while $\rho$ and $\jj$ denote the
electric charge and the current density, respectively.

We will see below that the solution found in Theorem \ref{main} has
nontrivial angular momentum, see Remark \ref{n4}.
\end{rem}

\begin{rem}
When $l=0$ and $q>0$ the last part of Theorem \ref{main} states the
existence of electrostatic solutions, namely finite energy solutions
with $u \neq 0, \phi \neq 0$ and $\A =\bs{0}$. This result is a variant
of a recent ones (see \cite{12} and \cite{dm2}).
\end{rem}
Moreover, let us observe that under general assumptions on $W$,
magnetostatic solutions (i.e. with $\omega = \phi =0$) do not exist.
In fact also the following proposition is proved in \cite{BF}:
\begin{rem}[\cite{BF}, Proposition 8]
Assume that $W$ satisfies the assumptions $W(0) = 0$ and $W'(s)s$
$\geq 0$. Then \eqref{39a}, \eqref{40a}, \eqref{41a} has no
solutions with $\omega = \phi = 0$ (see \cite[Proposition 1.2]{spos} for a related result).
\end{rem}

In our setting, we are able to prove the following nonexistence results:
\begin{teo}\label{senzanome}
If $u$ is a finite energy solution of \eqref{39a} with
\[
\int_{\R^3} N(u)\,dx\in \R,
\]
and
\begin{itemize}
\item $\omega^2<m^2$ and either $N\geq 0$ or $N'(s)s\leq 6 N(s)$ for all $s\in \R$,\\ or
\item $N'(s)s\geq 2N(s)$ for all $s\in \R$,
\end{itemize}
then $u\equiv 0$.
\end{teo}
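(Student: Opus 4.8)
The plan is to play a Nehari--type identity against a Pohozaev--type identity, reduce the electromagnetic contributions by means of \eqref{40a}--\eqref{41a}, and exploit a maximum principle, so that under each hypothesis the resulting identity becomes a sum of nonnegative quantities forced to vanish. First I would test \eqref{39a} with $u$, obtaining
\[
\int_{\R^3}|\nabla u|^2+\int_{\R^3}|l\nabla\theta-q\A|^2u^2-\int_{\R^3}(\omega-q\phi)^2u^2+\int_{\R^3}W'(u)u=0 .
\]
Next I would test \eqref{39a} with $x\cdot\nabla u$. Writing $W'(s)=m^2s+N'(s)$ and $b=|l\nabla\theta-q\A|^2-(\omega-q\phi)^2+m^2$, the standard computations in dimension three give $\int(-\Delta u)(x\cdot\nabla u)=-\tfrac12\int|\nabla u|^2$, $\int N'(u)(x\cdot\nabla u)=-3\int N(u)$ and $\int b\,u(x\cdot\nabla u)=-\tfrac12\int(3b+x\cdot\nabla b)u^2$, whence
\[
\int_{\R^3}|\nabla u|^2+\int_{\R^3}\big(3b+x\cdot\nabla b\big)u^2+6\int_{\R^3}N(u)=0 .
\]

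The heart of the matter is the term $\int x\cdot\nabla b\,u^2$, which I would evaluate using the Maxwell equations. For the electric part, $\nabla(\omega-q\phi)^2=-2q(\omega-q\phi)\nabla\phi$ and \eqref{40a} in the form $q(\omega-q\phi)u^2=-\Delta\phi$ reduce it to $2\int(-\Delta\phi)(x\cdot\nabla\phi)=-\int|\nabla\phi|^2$. For the magnetic part I would use \eqref{41a} as $q(l\nabla\theta-q\A)u^2=\curl\curl\A$, the commutator identity $\curl[(x\cdot\nabla)\mathbf F]=\curl\mathbf F+(x\cdot\nabla)\curl\mathbf F$, and $\curl\nabla\theta=0$ off the axis, to obtain $\int x\cdot\nabla|l\nabla\theta-q\A|^2u^2=+\int|\curl\A|^2$. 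The Pohozaev identity then reads
\[
\int_{\R^3}|\nabla u|^2+3\int_{\R^3}b\,u^2+\int_{\R^3}|\curl\A|^2-\int_{\R^3}|\nabla\phi|^2+6\int_{\R^3}N(u)=0 .
\]
Finally I would record the auxiliary identities obtained by testing \eqref{40a} with $\phi$ and \eqref{41a} with $\A$, namely $\int|\nabla\phi|^2=\omega\int(\omega-q\phi)u^2-\int(\omega-q\phi)^2u^2$ and $\int|\curl\A|^2=q\int(l\nabla\theta-q\A)\cdot\A\,u^2$, together with the maximum principle for \eqref{40a}, which yields $0\le\omega-q\phi\le\omega$ when $\omega>0$ (the case $\omega<0$ being symmetric).

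For the first alternative I would insert the electric auxiliary identity into the Pohozaev identity, so that the electric and mass contributions collect into $\int[\,3m^2-2(\omega-q\phi)^2-\omega(\omega-q\phi)\,]u^2$, while the magnetic integrals $3\int|l\nabla\theta-q\A|^2u^2$ and $\int|\curl\A|^2$ stay nonnegative; by the maximum principle the bracketed integrand is $\ge3(m^2-\omega^2)u^2\ge0$. If $N\ge0$ the Pohozaev identity is then already a sum of nonnegative terms summing to zero, forcing $(m^2-\omega^2)\int u^2=0$ and hence $u\equiv0$; if instead $N'(s)s\le6N(s)$, the same conclusion follows from the difference of the Pohozaev and Nehari identities, in which the nonlinearity enters as $\int(6N(u)-N'(u)u)\ge0$ and the remaining terms are nonnegative by the identical estimate.

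The delicate case is the second alternative, where the spectral gap $\omega^2<m^2$ is no longer available. Here I would first note that $N'(s)s\ge2N(s)$ together with $N(s)=o(s^2)$ near the origin (since $N(0)=N'(0)=N''(0)=0$) forces $N\ge0$ and $\int(N'(u)u-2N(u))\ge0$. Forming three times the Nehari identity minus the Pohozaev identity eliminates the magnetic, electric and mass potential integrals simultaneously and leaves
\[
2\int_{\R^3}|\nabla u|^2+\int_{\R^3}|\nabla\phi|^2+3\int_{\R^3}\big(N'(u)u-2N(u)\big)-\int_{\R^3}|\curl\A|^2=0 ,
\]
in which the electric field energy appears with the favorable sign. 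The main obstacle is precisely the magnetic field energy $-\int|\curl\A|^2$: without the mass gap it cannot be absorbed, so I expect the crux to be controlling it through the maximum principle for \eqref{41a}, which gives $0\le qa\le l$ (for $l>0$), hence a sign for $\int(l\nabla\theta-q\A)\cdot\A\,u^2$, in combination with the magnetic auxiliary identity rewritten as $\int|\curl\A|^2=l\int\frac{l-qa}{r^2}u^2-\int|l\nabla\theta-q\A|^2u^2$; reconciling this relation with the displayed identity is what should force $u\equiv0$. A secondary technical point, present throughout, is to justify that the singularity of $\nabla\theta$ on the axis $r=0$ produces no boundary contribution in the integrations by parts, which relies on the finiteness of $\int u^2/r^2$ and on the decay of the fields.
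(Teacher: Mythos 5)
Your strategy is the same as the paper's: the Nehari identity (the paper's \eqref{ne4}) is played against a Pohozaev identity, the electric terms are reduced via \eqref{40a} tested with $\phi$ (the paper's \eqref{v2}) and the maximum--principle bound $0\le q\phi\le\omega$ from \eqref{83}--\eqref{84}, and one concludes by inspecting signs. The genuine gap is that you do not prove the second alternative $N'(s)s\ge 2N(s)$: your discussion of that case ends with expectations (``I expect the crux to be\dots'', ``reconciling this relation\dots is what should force $u\equiv 0$''), not with an argument. That is exactly the case where the paper's proof is simplest: in the identity \eqref{v3} used there the magnetic density carries the coefficient $-3$, so after substituting the Nehari identity (in the form \eqref{ne6}) the terms $\int|l\nabla\theta-q\A|^2u^2$ cancel identically and one is left with \eqref{ne7}, a sum of nonnegative quantities; no $|\curl\A|^2$ term ever appears. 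In your version the leftover $-\int_{\R^3}|\curl\A|^2dx$ blocks the argument, and you leave it unresolved.

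This discrepancy is not cosmetic: it stems from an error in your evaluation of the magnetic term, located exactly at the point you relegate to ``a secondary technical point''. Set $\boldsymbol{F}:=l\nabla\theta-q\A$. Euler's identity for the degree $-1$ homogeneous field $\nabla\theta$ gives $(x\cdot\nabla)\boldsymbol{F}=-l\nabla\theta-q(x\cdot\nabla)\A$, and your computation discards the piece $-\tfrac{2l}{q}\int(\curl\curl\A)\cdot\nabla\theta\,dx$ on the grounds that $\curl\nabla\theta=0$ off the axis. That piece is not zero: substituting \eqref{41a} back, it equals $-2l\int u^2\boldsymbol{F}\cdot\nabla\theta\,dx$, and since $l\nabla\theta=\boldsymbol{F}+q\A$ while \eqref{59} with $\mathbf{V}=\A$ gives $q\int u^2\boldsymbol{F}\cdot\A\,dx=\int|\curl\A|^2dx$, it equals
\[
-2\int_{\R^3}|l\nabla\theta-q\A|^2u^2\,dx-2\int_{\R^3}|\curl\A|^2\,dx,
\]
which is strictly negative for any nontrivial vortex. (The integration by parts you invoke fails because $\nabla\theta$ has circulation $2\pi$ around $\Sigma$: distributionally $\curl\nabla\theta$ is a line current supported on the axis, and finiteness of $\int u^2/r^2$ does not remove its pairing with $\curl\A$.) Restoring this piece flips the sign of $\int|\curl\A|^2$ in your Pohozaev identity and lowers the coefficient of $\int|\boldsymbol{F}|^2u^2$ from $3$ to $1$; then three times \eqref{ne4} minus the corrected identity yields
\[
2\int_{\R^3}|\nabla u|^2dx+2\int_{\R^3}|l\nabla\theta-q\A|^2u^2dx+\int_{\R^3}|\nabla\phi|^2dx+\int_{\R^3}|\curl\A|^2dx+3\int_{\R^3}\bigl[N'(u)u-2N(u)\bigr]dx=0,
\]
which settles your ``delicate case'' at once --- but it simultaneously invalidates your treatment of the first alternative, where the wrongly signed $+\int|\curl\A|^2$ was used as a favorable term, so that part must be redone as well. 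Note, incidentally, that the corrected identity coincides neither with yours nor with \eqref{v1} (in \eqref{v1} the $x$-dependence of the coefficient $|l\nabla\theta-q\A|^2$ contributes nothing at all), so the axis contribution is the crux of the whole matter, not a technicality.
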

A natural consequence is the following
\begin{cor}
If $u\in L^p(\R^3)$ is a finite energy solution of
\eqref{39a}--\eqref{40a}--\eqref{41a}, and
\begin{itemize}
\item $\omega^2<m^2$ and
\[
N(u) =  \left\{
\begin{aligned}
\dfrac{|u|^p}{p}, \ \ & p \leq 6, \\
-\dfrac{|u|^p}{p}, \ \ & p \geq 6,
\end{aligned}\right.
\]
or
\item
\[
N(u) =  \left\{\begin{aligned}
\dfrac{|u|^p}{p}, \ \ & p \geq 2, \\
-\dfrac{|u|^p}{p}, \ \ & p \leq 2,
\end{aligned}\right.
\]
\end{itemize}
then $u\equiv 0$.
\end{cor}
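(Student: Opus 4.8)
The plan is to recognize the Corollary as a direct specialization of Theorem \ref{senzanome}. A finite energy solution $(u,\omega,\phi,\A)$ of the full system \eqref{39a}--\eqref{40a}--\eqref{41a} is in particular a finite energy solution of \eqref{39a}, so it suffices, for each of the listed nonlinearities $N$, to verify that the corresponding sign hypothesis of Theorem \ref{senzanome} holds, and then to invoke that theorem to conclude $u\equiv 0$.

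First I would check the integrability condition $\int_{\R^3} N(u)\,dx\in\R$ required by the theorem. Since in every case $N(u)=\pm\frac{1}{p}|u|^p$, the assumption $u\in L^p(\R^3)$ gives $\int_{\R^3} N(u)\,dx = \pm\frac{1}{p}\|u\|_{L^p}^p$, which is finite; this is precisely why the hypothesis $u\in L^p$ is imposed. For $2\le p\le 6$ it would already follow from the Sobolev embedding $H^1(\R^3)\hookrightarrow L^p$, but for $p>6$ it is a genuine extra requirement.

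Next I would verify the pointwise inequalities. Writing $N(s)=\pm\frac{1}{p}|s|^p$ on all of $\R$ (the even extension of $W$ makes $N$ even and $N'$ odd, so $N'(s)s=\pm|s|^p$ is even and it suffices to argue for $s\ge 0$), one has $N'(s)s=\pm|s|^p$ and $N(s)=\pm\frac{1}{p}|s|^p$. For the first alternative, when $p\le 6$ we have $N=\frac{1}{p}|s|^p\ge 0$, so the hypothesis ``$N\ge 0$'' of the first bullet of Theorem \ref{senzanome} holds; when $p\ge 6$ we have $N=-\frac{1}{p}|s|^p$ and the inequality $N'(s)s\le 6N(s)$ reads $-|s|^p\le -\frac{6}{p}|s|^p$, i.e. $\frac{6}{p}\le 1$, which is exactly $p\ge 6$. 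In both subcases the requirement $\omega^2<m^2$ is carried over unchanged, so the first bullet of Theorem \ref{senzanome} applies.

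For the second alternative I would check the single condition $N'(s)s\ge 2N(s)$. When $p\ge 2$ and $N=\frac{1}{p}|s|^p$ this reads $|s|^p\ge\frac{2}{p}|s|^p$, i.e. $\frac{2}{p}\le 1$, which is $p\ge 2$; when $p\le 2$ and $N=-\frac{1}{p}|s|^p$ it reads $-|s|^p\ge -\frac{2}{p}|s|^p$, i.e. $\frac{2}{p}\ge 1$, which is $p\le 2$. In either case the second bullet of Theorem \ref{senzanome} applies with no restriction on $\omega$, and we conclude $u\equiv 0$. There is no real obstacle here beyond bookkeeping; the only points demanding a little care are the boundary exponents $p=6$ and $p=2$, where the relevant inequalities hold with equality so that the matching between the corollary's case split and the theorem's hypotheses must be stated precisely, and the verification that the even extension of $N$ does not spoil the sign conditions for $s<0$.
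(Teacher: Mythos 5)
Your proposal is correct and is exactly the paper's intended argument: the paper offers no separate proof, presenting the corollary as ``a natural consequence'' of Theorem \ref{senzanome}, i.e.\ precisely the specialization $N(s)=\pm\frac{1}{p}|s|^p$ with the sign conditions checked case by case as you do. Your verification of the integrability hypothesis via $u\in L^p(\R^3)$ and of the inequalities $N'(s)s\le 6N(s)$ and $N'(s)s\ge 2N(s)$ at the boundary exponents $p=6$ and $p=2$ is accurate and complete.
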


\begin{rem}
Theorem \ref{senzanome} implies that, in general, in order to have
vortices with $N\geq0$ it is necessary to have a ``large''
frequency. We are not aware of similar results in the theory of
vortices, and we believe such a result can shed a new light on this
subject.
\end{rem}

In Section \ref{secfissa} we shall prove another existence result
concerning a different kind of solutions, namely solutions having
fixed $L^2$ norm. In general these solutions cannot be obtained from
the solutions found in Theorem \ref{main}, for example via a
rescaling argument, and we shall focus on the case $\int_{\R^3}u^2
dx = 1$, which corresponds to look for solutions having a density of
probability equal to 1. An analogous result could be obtained for
$\int_{\R^3}u^2 dx = c \in \R ^{+}$, but the physical meaning of
this kind of solutions is not clear to us. Indeed, in different
situations it may happen that if $\int_{\R^3}u^2=c$ is fixed a
priori, then solutions appear only for certain values of $c$: a
typical example is in the context of boson stars, when solutions
with fixed energy do exist if and only if $c<M_C$, the Chandrasekhar
limit mass (see \cite{LY} and \cite{pseudo}).

Our result is the following
\begin{prop} \label{vincolo}
Under the hypotheses of Theorem $\ref{main}$, there exists
$\mu\in\R$ and a solution in the sense of distributions for the
system
\[
\begin{aligned}
&-\Delta u + \left[ |l\nabla \theta - q\A|^2 - (\omega - q \phi)^2 \right ]u + W'(u) = \mu u,  \\
&-\Delta \phi = q(\omega - q\phi)u^2,  \\
&\curl (\curl \A) = q(l\nabla \theta - q\A)u^2,
\end{aligned}
\]
such that $\int_{\R^3}u^2 dx = 1.$ Moreover, if $\omega^2 \leq m^2$
and $ N'(s)s \geq 0$ for all $s\in \R$, then $\mu > 0$.
\end{prop}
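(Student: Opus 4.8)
The plan is to repeat the variational construction used for Theorem~\ref{main}, but now carried out on the $L^2$--sphere, so that the mass constraint produces the extra multiplier $\mu$. First I would fix the same functional framework of Section~\ref{functional}: work with cylindrically symmetric fields, impose the ansatz \eqref{n3a} for $\A$, and eliminate the gauge fields by solving the Gauss and Amp\`ere equations \eqref{40a}--\eqref{41a} for $\phi_u$ and $\A_u=a_u\nabla\theta$ associated with a given $u$, exactly as in the proof of Theorem~\ref{main}. This reduces the problem to a single energy functional $J$ in the variable $u$ (with the accompanying frequency $\omega$), defined on the space $H$ of finite--energy cylindrical functions. I would then introduce the smooth constraint manifold
\[
\mathcal{C}=\Big\{u\in H:\ \int_{\R^3}u^2\,dx=1\Big\},
\]
which has codimension one, and look for a minimizer of $J$ restricted to $\mathcal{C}$.

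For the existence part I would apply the direct method to $J|_{\mathcal{C}}$. Lower boundedness and coercivity of $J$ on $\mathcal{C}$, together with the control of the sign--indefinite electric term $-(\omega-q\phi)^2u^2$ and of the possibly sign--changing nonlinearity $N$, are obtained exactly as in Theorem~\ref{main}, using $W4)$ with $D\geq D_0$. Precompactness of minimizing sequences is inherited from the cylindrical symmetry and the structure of the reduced functional, as in Theorem~\ref{main}; this is the step I expect to require the most care, since the invariance is not full radial symmetry and one must rule out vanishing and dichotomy for minimizing sequences on $\mathcal{C}$. Once a minimizer $u\neq0$ is found, the Lagrange multiplier rule gives $dJ(u)=\mu\,\langle u,\cdot\rangle_{L^2}$ for some $\mu\in\R$, which is precisely the matter equation $-\Delta u+[\,|l\nabla\theta-q\A|^2-(\omega-q\phi)^2\,]u+W'(u)=\mu u$; the remaining two equations are the defining relations for $\phi_u$ and $\A_u$, so that $(u,\omega,\phi,\A)$ solves the augmented system in the sense of distributions with $\int_{\R^3}u^2\,dx=1$.

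It remains to determine the sign of $\mu$. Testing the matter equation against $u$, integrating, and using $W'(s)=m^2s+N'(s)$ together with $\int_{\R^3}u^2\,dx=1$, I obtain
\[
\mu=\int_{\R^3}|\nabla u|^2\,dx+\int_{\R^3}|l\nabla\theta-q\A|^2u^2\,dx-\int_{\R^3}(\omega-q\phi)^2u^2\,dx+m^2+\int_{\R^3}N'(u)u\,dx.
\]
Under the hypothesis $N'(s)s\geq0$ the last integral is nonnegative, and the gradient and magnetic terms are manifestly nonnegative. To handle the electric term I would apply the maximum principle to the Gauss equation rewritten as $-\Delta\phi+q^2u^2\phi=q\omega u^2$: testing with $\phi^-$ and with $(\phi-\omega/q)^+$ yields $|\omega-q\phi|\leq|\omega|$ pointwise, whence $(\omega-q\phi)^2\leq\omega^2$ and $\int_{\R^3}(\omega-q\phi)^2u^2\,dx\leq\omega^2\leq m^2$. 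Combining these estimates gives $\mu\geq\int_{\R^3}|\nabla u|^2\,dx>0$, the last strict inequality because a nonzero function of $L^2(\R^3)$ cannot have vanishing gradient. This yields $\mu>0$ and closes the argument.
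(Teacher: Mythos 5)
Your overall strategy (minimization on the $L^2$--sphere, extraction of a Lagrange multiplier, and testing the matter equation with $u$ to determine the sign of $\mu$) parallels the paper's, and your sign argument is correct and essentially the same as the paper's: it also expands $W'(u)u=m^2u^2+N'(u)u$ and controls the electric term through $\phi_u=\omega\Phi_u$ with $0\leq q\Phi_u\leq 1$ (estimate \eqref{84}), which is exactly your pointwise bound $(\omega-q\phi)^2\leq\omega^2\leq m^2$, yielding $\mu\geq\int_{\R^3}|\nabla u|^2\,dx>0$.

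The genuine gap is in your existence step. You assert that ``precompactness of minimizing sequences is inherited \dots as in Theorem \ref{main}'', but the proof of Theorem \ref{main} establishes no precompactness whatsoever: it produces, via Ekeland's principle, a Palais--Smale sequence which, after translations along the $x_3$--axis, merely has a \emph{nontrivial weak limit} (Lemma \ref{lemma17}, Lemma \ref{prop20} and the Esteban--Lions compactness lemma), and then passes to the limit in the Euler--Lagrange identities; the limit is a critical point of $E_{\sigma_0,q}$ for a possibly \emph{different} charge $\sigma_0$, and attainment of the infimum is never proved. Your direct-method plan, by contrast, needs strong convergence: without it the minimum need not be attained and, more seriously, the constraint $\int_{\R^3}u^2\,dx=1$ need not survive weak limits, since cylindrical symmetry gives compactness only in $L^p$ for $2<p<6$, not in $L^2$, so mass can escape to infinity. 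Ruling out vanishing and dichotomy would require a concentration--compactness argument (e.g. strict subadditivity of the infimum as a function of the mass), which you do not supply and which appears nowhere in the paper. The paper's actual mechanism for Proposition \ref{vincolo} avoids this: it runs Ekeland on $\tilde V=V\cap\mathcal{S}$ to obtain a \emph{constrained} Palais--Smale sequence together with multipliers $\mu_n$, proves $(\mu_n)_n$ is bounded by testing the differential with $(u_n,\A_n)$ and using $\int_{\R^3}u_n^2\,dx=1$ (Proposition \ref{prop21new}), extracts $\mu_n\to\mu$, and passes to the limit in the equation containing the multiplier --- never claiming the infimum is achieved. Two secondary deviations: you propose to eliminate $\A$ as well, whereas the paper eliminates only $\phi$ and keeps $\A$ as a variable in the manifold $\mathcal{A}$ (where $\int_{\R^3}|\curl\A|^2\,dx=\int_{\R^3}|\nabla\A|^2\,dx$ by Lemma \ref{lemma15}); and your treatment of $\omega$ (fixed parameter versus $\omega=\sigma/K_q(u)$ determined by the charge) is left vague, while in the paper $\omega$ arises as the limit of $\omega_n=\sigma/K_q(u_n)$.
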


Due to the presence of the multiplier $\mu$, we give the following
\begin{defin}\label{massa}
We call \emph{effective mass} of the system the quantity $\tilde m =
m^2 - \mu$.
\end{defin}

\section{Preliminary setting}

\subsection{Standing wave solutions and vortices}

Substituting \eqref{29a} and \eqref{30a} in \eqref{evola}, we get
the following equations in $\R^3$:
\begin{align}
&- \Delta u + \biggl[ |\nabla S_0 - q\A|^2 - (\omega - q \phi)^2  \biggl]u + W'(u) = 0, \label{31} \\
&- \text{div} \biggl[ (\nabla S_0 - q \A)u^2 \biggr] = 0, \label{32} \\
&- \Delta \phi = q(\omega - q\phi)u^2, \label{33} \\
&\curl (\curl \A) = q(\nabla S_0 - q\A)u^2. \label{34}
\end{align}
We  can easily observe that \eqref{32} follows from \eqref{34}: as a
matter of fact, applying the divergence operator to both sides of
\eqref{34}, we immediately get \eqref{32}. Then we are reduced to
study the system \eqref{31}--\eqref{33}--\eqref{34}.

We are interested in finite-energy solutions - the most relevant
physical case - i.e. solutions of system
\eqref{31}--\eqref{33}--\eqref{34} for which the following energy is
finite:
\begin{equation} \label{35}
\begin{aligned}
\mathcal{E}(u) =& \frac{1}{2} \int_{\R^3}{\biggl( |\nabla u|^2 + |\nabla \phi|^2 +
|\curl \A|^2 + (|\nabla S_0 - q\A|^2 + (\omega - q\phi)^2)u^2 \biggr)dx} \\
&+ \int_{\R^3}{W(u)dx}
\end{aligned}
\end{equation}

Furthermore, in order to study the behavior of some particular functional which will be introduced later on, it is useful to give the electric charge $Q$ a specific representation in terms of the solution $u$, as (see e.g. \cite{BF}, p.644)
 \begin{equation}\label{36}
Q = q\sigma , \end{equation} where
 \begin{equation} \label{37}
\sigma = \int_{\R^3}
{(\omega - q\phi)u^2 \ dx}.
 \end{equation}
However, our strategy will consist in fixing a real number $\sigma$ and then find a solution $u$ which turns out to verify \eqref{37}.

\begin{rem}
When $u=0$, the only finite energy gauge potentials which
solve \eqref{33}, \eqref{34} are the trivial ones $\A =\bs{0}, \phi =0$.
\end{rem}

In particular, following \cite{BF}, we shall look for solutions of
the above system which are known in literature as vortices. In order
to do that, we need some preliminaries. First, set
\[
\Sigma = \Big\{ (x_1,x_2,x_3) \in \R^3: x_1=x_2=0 \Big\},
\]
and define the map
\[
\begin{aligned}
&\theta : \R^3 \setminus \Sigma \rightarrow \frac{\R}{2\pi \mathbb{Z}}, \\
&\theta(x_1,x_2,x_3) = \text{Im}\log(x_1+ix_2).
\end{aligned}
\]
The following definition is crucial:
\begin{defin}
A finite energy solution $(u,S_0,\phi,\A)$ of
\eqref{31}--\eqref{33}--\eqref{34} is called
\textbf{\textit{vortex}} if $S_0 =
l\theta$ for some $l \in \Z\setminus \{0\}$.
\end{defin}
Of course, in this case, $\psi$ has the form
\begin{equation} \label{38}
\psi (t,x) = u(x)e^{i(l\theta (x)-\omega t)}, \ l \in
\mathbb{Z}\setminus \{ 0 \}.
\end{equation}

\begin{rem} \label{n4}
In \cite[Proposition 7]{BF} it was proved that if $(u,\omega,\phi,\A)$ is a non trivial, finite energy solution
of  \eqref{31}--\eqref{33}--\eqref{34}, then the angular
momentum $\MM_m$ has the expression
\begin{equation}
\label{n5} \MM_m = - \left[ \int_{\R^3}{(l-qa)(\omega - q
\phi)u^2dx} \right] \mathbf{e_3},
\end{equation}
and, if $l \neq 0$, it does not vanish. Hence, in this case,  the
name ``vortex'' is justified and by Theorem \ref{main} the existence
of a spinning $Q$--ball is guaranteed.
\end{rem}

Now, observe that $\theta \in C^{\infty} \left ( \R^3 \setminus
\Sigma, \frac{\R}{2\pi \mathbb{Z}} \right  )$, and, with an abuse of
notation, we set
\[
\nabla \theta (x) = \frac{x_2}{x_1^2 + x_2^2}\mathbf{e_1} - \frac{x_1}{x_1^2 + x_2^2}\mathbf{e_2},
\]
where $\mathbf{e_1}, \mathbf{e_2}, \mathbf{e_3}$ is the standard
frame in $\R^3$.

Using the {\it Ansatz} \eqref{38}, equations \eqref{31}, \eqref{33},
\eqref{34} give rise to equations \eqref{39a}, \eqref{40a},
\eqref{41a}, which is the Klein--Gordon--Maxwell system we shall
study from now on.

\begin{rem}
If $\A = \left( \dfrac{x_2}{x_1^2 + x_2^2}, - \dfrac{x_1}{x_1^2 +
x_2^2}, 0  \right)$, we obviously get $\curl \A = 0$. {\it
Viceversa}, if $\A$ is irrotational and it solves \eqref{41a}, then
$\A=\frac{l}{q}\nabla \theta$. In such a case, system
\eqref{39a}--\eqref{40a}--\eqref{41a} reduces to the one considered
in \cite{dm2}, where, by Theorem \ref{senzanome}, we can now say
that the nontrivial solution found therein is such that
$\omega^2\geq m^2$.
\end{rem}

\subsection{Functional approach}\label{functional}
We shall follow the functional approach of \cite{BF}, with minor
changes in some parts. Anyway, our main Theorem \ref{main} has been
proved thanks to  completely new results (see Lemma \ref{inf0} and
Proposition \ref{lemma17}), which let us avoid any bound on $q$, differently from \cite{BF}.

First, we denote by $L^p\equiv L^p(\R^3)$ ($1\leq
p<+\infty$) the usual Lebesgue space endowed with the norm
\[
\|u\|_p^p:=\int_{\R^3}|u|^p\,dx.
\]
We also recall the continuous embeddings
\begin{equation}\label{imme}
H^1(\R^3)\hookrightarrow D^1(\R^3)\hookrightarrow L^6(\R^3)\quad
\hbox{and} \quad H^1(\R^3)\hookrightarrow L^p(\R^3)\quad \;\;\forall\, p\in[2,6] ,
\end{equation}
being $6$ the critical exponent for the Sobolev embedding
$\mathcal{D}^1(\R^3)\hookrightarrow L^p(\R^3)$. Here $H^1 \equiv H^1(\R^3)$ denotes the usual Sobolev space with norm
\[
\|u\|^2_{H^1} = \int_{\R^3}{(|\nabla u|^2+u^2)dx}
\]
and $\mathcal{D}^1=\mathcal{D}^1(\R^3)$ is the completion of
${\mathscr D}=C^\infty_C(\R^{3})$ with respect to the norm
\[
\|u\|_{\mathcal{D}^1}^2:=\int_{\R^3}|\nabla u|^{2}\,dx,
\]
induced by the scalar product $(
u,v)_{\mathcal{D}^1}:=\int_{\R^3}\nabla u\cdot \nabla v\,dx$.

Moreover, we need the weighted Sobolev space $\hat{H}^1 \equiv
\hat{H}^1_l(\R^3)$, depending on a fixed integer $l$, whose norm is
given by
\[
\|u\|^2_{\hat{H}^1} = \int_{\R^3}{\left[ |\nabla u|^2 + \left(
1+\frac{l^2}{r^2} \right)u^2 \right]dx}, \ l \in \mathbb{Z},
\]
where $r = \sqrt{x_1^2+x_2^2}$. Clearly $\hat{H}^1=H^1$ if and only
if $l=0$. Moreover, it is not hard to see that
\begin{equation}\label{denso}
\mbox{$C^\infty_C(\R^3)\cap \hat H^1(\R^3)$ is dense in $\hat H^1(\R^3)$}.
\end{equation}

We set
\[
\begin{aligned}
&H = \hat{H}^1 \times \mathcal{D}^1 \times \left( \mathcal{D}^1  \right)^3, \\
& \|(u,\phi,\A)\|^2_H = \int_{\R^3}{ \left[ |\nabla u|^2+ \left(  1+\frac{l^2}{r^2}
 \right)u^2 + |\nabla \phi|^2 + |\nabla \A|^2 \right] dx}.
\end{aligned}
\]
We shall denote by $u=u(r,x_3)$ any real function in $\R^3$ which
depends only on the cylindrical coordinates $(r,x_3)$, and we set
\[
\mathscr{D}_\sharp = \Big\{ u \in \mathscr{D}: u=u(r,x_3)
\Big\}.
\]
Finally, we shall denote by $\mathcal{D}^1_\sharp$ the closure of
$\mathscr{D}_\sharp$ in the $\mathcal{D}^1$ norm and by
$\hat{H}^1_\sharp$ the closed subspace of $\hat{H}^1$ whose
functions are of the form $u=u(r,x_3)$.

Now, we consider the functional
\begin{equation} \label{56}
\begin{aligned}
J(u,\phi,\A) &= \frac{1}{2}\int_{\R^3}{\big[|\nabla u|^2 - |\nabla \phi|^2 + |\curl \A|^2\big] dx} \\
&+ \frac{1}{2} \int_{\R^3}{\left[ |l \nabla \theta - q\A|^2 - (\omega -q\phi)^2  \right]u^2 dx} + \int_{\R^3}{W(u) dx},
\end{aligned}
\end{equation} where $(u,\phi,\A) \in H$. Formally, equations \eqref{39a}, \eqref{40a} and
\eqref{41a} are the Euler--Lagrange equations of the functional $J$,
and, indeed, standard computations show that the following lemma
holds:
\begin{lemma}  \label{lemma9}
Assume that $W$ satisfies $W3)$. Then the functional $J$ is of class
$C^1$ on $H$ and equations \eqref{39a}, \eqref{40a} and \eqref{41a}
are its Euler--Lagrange equations.
\end{lemma}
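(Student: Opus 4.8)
The plan is to decompose $J$ into its purely quadratic part and its coupled/nonlinear part, verify that every term is finite on $H$, establish $C^1$ regularity term by term, and finally read off the Euler--Lagrange equations. First I would isolate the three quadratic pieces $\frac12\int_{\R^3}|\nabla u|^2$, $-\frac12\int_{\R^3}|\nabla\phi|^2$ and $\frac12\int_{\R^3}|\curl\A|^2$, which are continuous quadratic forms for the norms of $\hat H^1$, $\mathcal{D}^1$ and $(\mathcal{D}^1)^3$ respectively, hence of class $C^\infty$ with the obvious bilinear differentials. Writing $W(s)=\frac{m^2}{2}s^2+N(s)$ as in $W3)$, the mass term $\frac{m^2}{2}\int_{\R^3}u^2$ is likewise a continuous quadratic form on $\hat H^1\subset H^1$. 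What then remains are the genuinely coupled expressions $\frac12\int_{\R^3}|l\nabla\theta-q\A|^2u^2$, $-\frac12\int_{\R^3}(\omega-q\phi)^2u^2$ and the potential term $\int_{\R^3}N(u)$.

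For well-posedness I would expand the two coupling integrals and estimate each monomial by H\"older's inequality using the embeddings \eqref{imme}. The decisive point is the identity $|\nabla\theta|^2=1/r^2$, so that $l^2\int_{\R^3}|\nabla\theta|^2u^2=l^2\int_{\R^3}u^2/r^2$ is controlled by the weighted part of the $\hat H^1$ norm; this also shows the integral $\int_{\R^3}u^2/r^2$ in Theorem \ref{main} is finite. The cross term $\int_{\R^3}(\nabla\theta\cdot\A)u^2$ is dominated by $\int_{\R^3}(u/r)\,|\A|\,u\,dx$, which is finite since $u/r\in L^2$, $\A\in L^6$ and $u\in L^3$ (so that $\frac12+\frac16+\frac13=1$); the remaining monomials $\int_{\R^3}|\A|^2u^2$, $\int_{\R^3}\phi u^2$ and $\int_{\R^3}\phi^2u^2$ are handled the same way via $\A,\phi\in L^6$ and $u\in L^3\cap L^{12/5}$. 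Finally, integrating the bound in $W3)$ gives $|N(s)|\le C(|s|^\ell+|s|^p)$ with $2<\ell\le p<6$, so $\int_{\R^3}N(u)$ is finite because $u\in L^\ell\cap L^p$.

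For the $C^1$ assertion, the coupling terms are fixed continuous multilinear forms (bilinear in $\A$ or $\phi$ and quadratic in $u$) composed with the embeddings, so their differentials exist and are continuous by multilinearity together with the same H\"older estimates; these pieces are in fact smooth. The only term that actually lowers the regularity to $C^1$ is $u\mapsto\int_{\R^3}N(u)$, for which I would invoke the standard theory of Nemytskii operators: the growth bound on $|N'|$ in $W3)$ ensures that $u\mapsto N'(u)$ is continuous from $H^1$ into the relevant dual space and that $v\mapsto\int_{\R^3}N'(u)v$ is the continuous Fr\'echet differential. Collecting the differentials, the derivative in $u$ along $v$ is $\int_{\R^3}\nabla u\cdot\nabla v+\int_{\R^3}[|l\nabla\theta-q\A|^2-(\omega-q\phi)^2]uv+\int_{\R^3}W'(u)v$, whose vanishing is the weak form of \eqref{39a}; the derivative in $\phi$ along $w$ is $-\int_{\R^3}\nabla\phi\cdot\nabla w+\int_{\R^3}q(\omega-q\phi)u^2w$, i.e. \eqref{40a}; and the derivative in $\A$ along $\mathbf{B}$ is $\int_{\R^3}\curl\A\cdot\curl\mathbf{B}-q\int_{\R^3}(l\nabla\theta-q\A)\cdot\mathbf{B}\,u^2$, i.e. \eqref{41a}.

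The main obstacle I expect is the bookkeeping in the second step: checking that every monomial arising from the two coupling integrals lands in a (triple or quadruple) H\"older product whose exponents are admissible for \eqref{imme} and for the weighted space, and verifying the continuity -- not merely the existence -- of the differential of the $N$-term. The weighted integral $\int_{\R^3}u^2/r^2$ and the cross term $\int_{\R^3}(\nabla\theta\cdot\A)u^2$ are precisely where the choice of $\hat H^1$ in place of $H^1$ is essential, so I would dispose of them first.
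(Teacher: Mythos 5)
Your proposal is correct, and it is precisely the ``standard computations'' that the paper invokes without writing out: the paper states Lemma \ref{lemma9} with no proof, and your term-by-term verification (splitting $J$ into bounded quadratic/multilinear forms, checking each monomial by H\"older's inequality with the embeddings \eqref{imme}, and treating $\int_{\R^3}N(u)\,dx$ by the Nemytskii-operator theory under the growth bound of $W3)$) is exactly the intended route. In particular, your use of $|\nabla\theta|^2=1/r^2$ and of $u/r\in L^2$, $\A\in L^6$, $u\in L^3$ for the weighted and cross terms is exactly the reason the space $\hat H^1$ is used in place of $H^1$, so nothing is missing.
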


By the above lemma it follows that any critical point $(u,\phi,\A)
\in H$ of $J$ is a weak solutions of system
\eqref{39a}--\eqref{40a}--\eqref{41a}, namely
\begin{align}
&\int_{\R^3}{\big[\nabla u \cdot \nabla v + \left[ |l\nabla \theta -
q\A|^2- (\omega - q \phi)^2 \right]uv + W'(u)v\big]
dx} =0 \ \forall\, v \in \hat{H}^1, \label{57} \\
& \int_{\R^3}{\big[\nabla \phi \cdot \nabla w - qu^2(\omega -q \phi)w\big]\,
dx}=0 \ \forall\, w \in \mathcal{D}^1, \label{58} \\
&\int_{\R^3}{\big[(\curl \A)\cdot(\curl \mathbf{V})-qu^2(l\nabla
\theta-q\A)\cdot \mathbf{V} \big]dx}=0 \ \forall\, \mathbf{V} \in
(\mathcal{D}^1)^3. \label{59}
\end{align}

\subsection{Solutions in the sense of distributions}
Since $\mathscr{D}$ is {\em not} contained in $\hat{H}^1$, a
solution $(u,\phi,\A) \in H$ of \eqref{57}, \eqref{58}, \eqref{59}
need not be a solution of \eqref{39a}, \eqref{40a}, \eqref{41a} in
the sense of distributions on $\R^3$. However, we will show that the
singularity of $\nabla \theta (x)$ on $\Sigma$ is removable in the
following sense:
\begin{teo} \label{th10}
Let $(u_0,\phi_0,\A_0) \in H, u_0 \geq 0$ be a solution of
\eqref{57}, \eqref{58}, \eqref{59} (i.e. a critical point of $J$).
Then $(u_0,\phi_0,\A_0)$ is a solution of system
\eqref{39a}--\eqref{40a}--\eqref{41a} in the sense of distributions,
namely
\begin{align}
&\int_{\R^3}{\big[\nabla u_0 \cdot \nabla v + \left[ |l\nabla \theta
- q\A_0|^2 - (\omega - q \phi_0)^2 \right]u_0v + W'(u_0)v\big] dx}
=0
\ \forall\, v \in \mathscr{D}, \label{60} \\
& \int_{\R^3}{\big[\nabla \phi_0 \cdot \nabla w - qu_0^2(\omega -q \phi_0)
w\big] dx}=0 \ \forall\, w \in \mathscr{D}, \label{61} \\
&\int_{\R^3}{\big[(\curl \A_0)\cdot(\curl \mathbf{V})-qu_0^2(l\nabla
\theta-q\A_0)\cdot \mathbf{V}\big] dx}=0\  \forall\, \mathbf{V} \in
(\mathscr{D})^3. \label{62}
\end{align}
\end{teo}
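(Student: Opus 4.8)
The plan is to reduce everything to the matter equation \eqref{60}. First observe that \eqref{61} and \eqref{62} require no work: since $\mathcal{D}^1$ is by definition the completion of $\mathscr{D}$, we have $\mathscr{D}\subset\mathcal{D}^1$ and $(\mathscr{D})^3\subset(\mathcal{D}^1)^3$, so \eqref{61} and \eqref{62} are merely the restrictions of \eqref{58} and \eqref{59} to smooth compactly supported test functions. Hence the entire difficulty concentrates in \eqref{60}, and it is caused exactly by the fact that $\mathscr{D}\not\subset\hat{H}^1$: a function $v\in\mathscr{D}$ that does not vanish on $\Sigma$ fails to satisfy $\int_{\R^3}(l^2/r^2)v^2\,dx<\infty$, so it is not an admissible test function in \eqref{57}. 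The strategy is therefore to approximate $v$ by functions that vanish near $\Sigma$, insert them into \eqref{57}, and pass to the limit, thereby showing the singularity of $\nabla\theta$ on $\Sigma$ is removable.

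Concretely, I would fix a smooth nondecreasing $\chi\colon[0,\infty)\to[0,1]$ with $\chi\equiv 0$ on $[0,1]$ and $\chi\equiv 1$ on $[2,\infty)$, and set $\chi_\ve(x)=\chi(r/\ve)$, $r=\sqrt{x_1^2+x_2^2}$. Since $\chi_\ve$ vanishes in a neighbourhood of $\Sigma$ it is smooth on $\R^3$, so $v_\ve:=\chi_\ve v$ vanishes near $\Sigma$ and thus belongs to $\hat{H}^1$; moreover $v_\ve\to v$ and $\chi_\ve\nearrow 1$ pointwise on $\R^3\setminus\Sigma$ as $\ve\to 0$. Inserting $v_\ve$ in \eqref{57} and expanding $\nabla v_\ve=\chi_\ve\nabla v+v\nabla\chi_\ve$, the gradient term splits into $\int_{\R^3}\chi_\ve\nabla u_0\cdot\nabla v\,dx$, which tends to $\int_{\R^3}\nabla u_0\cdot\nabla v\,dx$ by dominated convergence, and the commutator $\int_{\R^3}v\,\nabla u_0\cdot\nabla\chi_\ve\,dx$. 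The latter is supported in the annular region $A_\ve=\{\ve\le r\le 2\ve\}\cap\mathrm{supp}\,v$, where $|\nabla\chi_\ve|\le C/\ve$; since $|A_\ve|\le C\ve^2$, Cauchy--Schwarz gives $|\int_{\R^3}v\,\nabla u_0\cdot\nabla\chi_\ve\,dx|\le (C/\ve)\|v\|_\infty\|\nabla u_0\|_{L^2(A_\ve)}|A_\ve|^{1/2}\le C'\|v\|_\infty\|\nabla u_0\|_{L^2(A_\ve)}\to 0$, because $|A_\ve|\to 0$ and $\nabla u_0\in L^2$.

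Next I would dispose of the zeroth order terms that are not genuinely singular. Writing $|l\nabla\theta-q\A_0|^2=l^2/r^2-2lq\,\nabla\theta\cdot\A_0+q^2|\A_0|^2$, the contributions of $(\omega-q\phi_0)^2u_0$, of $W'(u_0)$, and of the last two summands above all converge by dominated convergence: the embeddings \eqref{imme} give $u_0,\phi_0\in L^6$ and $\A_0\in(L^6)^3$, whence $(\omega-q\phi_0)^2u_0,\ |\A_0|^2u_0\in L^2_{\mathrm{loc}}$ and $W'(u_0)\in L^1_{\mathrm{loc}}$ (by $W3)$), while $|\nabla\theta\cdot\A_0|\,u_0\le (u_0/r)|\A_0|\in L^1_{\mathrm{loc}}$ since $u_0/r\in L^2$ (as $u_0\in\hat{H}^1$) and $\A_0\in L^6$. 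In each case the integrand is dominated on $\mathrm{supp}\,v$ by a fixed $L^1$ function, so multiplication by $\chi_\ve v$ passes to the limit. This isolates the only term carrying the full singularity, namely $\int_{\R^3}(l^2/r^2)u_0v_\ve\,dx$.

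The crux --- and the step where the hypothesis $u_0\ge 0$ is essential --- is to show $(l^2/r^2)u_0v\in L^1(\R^3)$ and that the singular term converges to $\int_{\R^3}(l^2/r^2)u_0 v\,dx$; this is \emph{not} a consequence of $u_0\in\hat{H}^1$ alone, since $v/r\notin L^2$. The point is that $\chi_\ve|v|$ is Lipschitz, compactly supported and vanishes near $\Sigma$, hence lies in $\hat{H}^1$ and is admissible in \eqref{57}. Rearranging \eqref{57} with test function $\chi_\ve|v|$ to isolate the singular term expresses $\int_{\R^3}(l^2/r^2)u_0\chi_\ve|v|\,dx$ as a sum of gradient and regular terms, each bounded uniformly in $\ve$ by the estimates above. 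Since $u_0\ge 0$ and $\chi_\ve|v|\nearrow|v|$, monotone convergence then forces $\int_{\R^3}(l^2/r^2)u_0|v|\,dx<\infty$. With this $L^1$ bound, dominated convergence yields $\int_{\R^3}(l^2/r^2)u_0 v_\ve\,dx\to\int_{\R^3}(l^2/r^2)u_0 v\,dx$, and collecting all the limits in \eqref{57} applied to $v_\ve$ produces exactly \eqref{60}. I expect this integrability argument for the singular term to be the only delicate point; everything else is routine dominated convergence built on \eqref{imme} and the weighted bound $u_0/r\in L^2$.
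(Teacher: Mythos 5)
Your proof is correct, and its crux coincides with the paper's: both insert cutoffs vanishing near $\Sigma$ into \eqref{57}, and both obtain integrability of the genuinely singular term $\int (l^2/r^2)u_0 v\,dx$ by the same device --- the equation itself bounds that term uniformly in the cutoff parameter, and monotone convergence (this is exactly where $u_0\ge 0$ and the monotonicity of the cutoff enter) upgrades the bound to an $L^1$ statement, after which dominated convergence finishes. Where you differ is in the machinery around that crux, and your choices are more self-contained. The paper splits $v=v^+-v^-$ and works with $\vp_n=v^\pm\chi_n$ throughout, so that monotone convergence applies directly to the terms of interest; you instead test with $\chi_\ve v$ for the main limit and use $\chi_\ve|v|$ only as an auxiliary test function to extract the $L^1$ bound, a slight streamlining. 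More substantially, the paper passes to the limit in the regular terms by invoking the weak $H^1$ convergence $\vp_n\rightharpoonup v^+$ (Lemma \ref{lemma11}, imported from \cite{BF}) paired against $L^{6/5}$ densities, whereas you avoid that lemma entirely: you treat the gradient term by the explicit commutator estimate on the annulus $\{\ve\le r\le 2\ve\}$ (correct: the factor $|\nabla\chi_\ve|\le C/\ve$ against $|A_\ve|^{1/2}\le C\ve$ cancels, leaving $\|\nabla u_0\|_{L^2(A_\ve)}\to0$ by absolute continuity of the integral), and the remaining regular terms by plain dominated convergence. You also simplify the paper's treatment of the cross term $\nabla\theta\cdot\A_0\,u_0$ --- which there requires an interpolation bound on $\|\vp_n/r\|_{L^{3/2}}$ plus monotone and dominated convergence --- by dominating it once and for all with $(u_0/r)|\A_0|\in L^{3/2}$, using $u_0/r\in L^2$. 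Note only that this last bound presupposes $l\neq 0$; that costs nothing, since for $l=0$ the cross and singular terms are absent and $\mathscr{D}\subset\hat H^1$ makes \eqref{60} immediate --- exactly the trivial case the paper dispatches first, and which you should state explicitly for completeness.
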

A proof of Theorem \ref{th10} was given in \cite{BF}.

Let us now remark that the presence of the term
$-\int_{\R^3}{|\nabla \phi|^2 dx}$ gives the functional $J$ a strong
indefiniteness, namely any nontrivial critical point of $J$ has
infinite Morse index. It turns out that a direct approach to finding
critical points for $J$ is very hard. For this reason, as usual in
this setting, it is convenient to introduce a \textit{reduced
functional}.

\subsection{The reduced functional}

Writing equation \eqref{40a} as
\begin{equation}\label{79}
-\Delta \phi + q^2u^2\phi=q\omega u^2,
\end{equation}
then we can verify that the following holds:
\begin{prop}[\cite{teadim}, Proposition 2.2]\label{ex}
For every $u \in H^1(\R^3)$, there exists a unique $\phi = \phi_u
\in \mathcal{D}^1$ which solves \eqref{79} and there exists $S>0$
such that
\begin{equation}\label{stima}
\|\phi_u\|\leq qS\|u\|_{12/5}^2 \mbox{ for every }u\in H^1(\R^3).
\end{equation}
\end{prop}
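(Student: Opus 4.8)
The plan is to read \eqref{79} as a \emph{linear} elliptic problem in the unknown $\phi$, with $u$ and $\omega$ frozen, and to solve it variationally on $\mathcal{D}^1$. I would introduce the bilinear form
\[
a(\phi,\psi)=\int_{\R^3}\nabla\phi\cdot\nabla\psi\,dx+q^2\int_{\R^3}u^2\phi\psi\,dx
\]
and the linear functional $L(\psi)=q\omega\int_{\R^3}u^2\psi\,dx$ on $\mathcal{D}^1$, so that a weak solution of \eqref{79} is precisely a $\phi_u\in\mathcal{D}^1$ satisfying $a(\phi_u,\psi)=L(\psi)$ for every $\psi\in\mathcal{D}^1$. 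The strategy is then to verify the hypotheses of the Lax--Milgram theorem.

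Coercivity is immediate, and it is exactly what makes this reduction work: since $q^2\int_{\R^3}u^2\phi^2\ge0$, one has $a(\phi,\phi)\ge\|\phi\|_{\mathcal{D}^1}^2$. For the continuity of $a$ and of $L$ the only point is to control the weighted integrals, and here I would invoke the embeddings recalled in \eqref{imme}. By H\"older with exponents $(3/2,6,6)$,
\[
\Big|\int_{\R^3}u^2\phi\psi\,dx\Big|\le\|u\|_3^2\,\|\phi\|_6\,\|\psi\|_6\le C\|u\|_3^2\,\|\phi\|_{\mathcal{D}^1}\|\psi\|_{\mathcal{D}^1},
\]
where $\mathcal{D}^1\hookrightarrow L^6$ is used and $u\in H^1\hookrightarrow L^3$ keeps the constant finite; together with Cauchy--Schwarz on the gradient term this gives continuity of $a$. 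Likewise, H\"older with exponents $(6/5,6)$ yields
\[
|L(\psi)|\le q|\omega|\,\|u\|_{12/5}^2\,\|\psi\|_6\le qC|\omega|\,\|u\|_{12/5}^2\,\|\psi\|_{\mathcal{D}^1},
\]
which is finite because $u\in H^1\hookrightarrow L^{12/5}$ (note $12/5\in[2,6]$). Hence $L$ is a bounded functional on $\mathcal{D}^1$, and Lax--Milgram provides a unique $\phi_u\in\mathcal{D}^1$ solving \eqref{79}.

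For the a priori bound I would simply test the equation against $\phi_u$ itself, obtaining $\|\phi_u\|_{\mathcal{D}^1}^2+q^2\int_{\R^3}u^2\phi_u^2=q\omega\int_{\R^3}u^2\phi_u$. Dropping the nonnegative second term on the left and estimating the right-hand side exactly as in the continuity bound for $L$ gives $\|\phi_u\|_{\mathcal{D}^1}^2\le qC|\omega|\,\|u\|_{12/5}^2\,\|\phi_u\|_{\mathcal{D}^1}$, whence $\|\phi_u\|_{\mathcal{D}^1}\le qS\|u\|_{12/5}^2$ with $S:=C|\omega|$ (the case $\omega=0$ forcing $\phi_u=0$ is trivial).

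As for the main difficulty, I do not expect a deep obstacle: the problem is linear and coercivity comes for free from the sign of the zeroth-order coefficient $q^2u^2\ge0$, so the whole argument reduces to one application of Lax--Milgram. The only thing requiring genuine care is the bookkeeping of the H\"older and Sobolev exponents, so that every integral is well defined for $u$ merely in $H^1(\R^3)$ (in particular checking $u\in L^3\cap L^{12/5}$), and ensuring that the very estimate proving boundedness of $L$ is reused verbatim to produce the quantitative bound \eqref{stima}.
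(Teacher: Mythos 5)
Your proof is correct, and it is essentially the canonical argument: the paper itself offers no proof of this proposition (it is quoted from \cite{teadim}, Proposition 2.2), and the proof given there is exactly this Lax--Milgram scheme, with coercivity free from the sign of $q^2u^2$, continuity of the form via $u\in L^3$ and $\mathcal{D}^1\hookrightarrow L^6$, continuity of the right-hand side via $u\in L^{12/5}$, and the estimate \eqref{stima} obtained by testing with $\phi_u$ and discarding the nonnegative term. The only cosmetic remark is that your constant $S=C|\omega|$ depends on $\omega$, which is harmless here since the paper applies \eqref{stima} through the normalized solution $\Phi_u$ of \eqref{82} (i.e.\ with $\omega=1$), so the $\omega$-dependence never causes trouble.
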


\begin{lemma}\label{mettere}
If $u \in \hat{H}^1_\sharp(\R^3)$, then the solution $\phi = \phi_u$
of \eqref{79} belongs to $\mathcal{D}^1_\sharp(\R^3)$.
\end{lemma}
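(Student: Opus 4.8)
The plan is to exploit the uniqueness of $\phi_u$ guaranteed by Proposition \ref{ex} together with the rotational invariance that the cylindrical symmetry of $u$ confers on equation \eqref{79}. For $\alpha\in\R$ let $R_\alpha:\R^3\to\R^3$ denote the rotation by angle $\alpha$ about the $x_3$--axis, i.e. $R_\alpha(x_1,x_2,x_3)=(x_1\cos\alpha-x_2\sin\alpha,\,x_1\sin\alpha+x_2\cos\alpha,\,x_3)$. Since $R_\alpha$ is orthogonal, the map $\phi\mapsto\phi\circ R_\alpha$ is a linear isometry of $\mathcal{D}^1$ and the Laplacian commutes with it.

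First I would show that $\phi_u$ is invariant under every $R_\alpha$. Fix $\alpha$ and set $\phi^\alpha:=\phi_u\circ R_\alpha\in\mathcal{D}^1$. Because $u\in\hat{H}^1_\sharp$ depends only on $(r,x_3)$ we have $u^2\circ R_\alpha=u^2$; hence, composing \eqref{79} with $R_\alpha$ and using $\Delta(\phi_u\circ R_\alpha)=(\Delta\phi_u)\circ R_\alpha$, a direct check shows that $\phi^\alpha$ solves $-\Delta\phi^\alpha+q^2u^2\phi^\alpha=q\omega u^2$ as well. By the uniqueness asserted in Proposition \ref{ex} we conclude $\phi^\alpha=\phi_u$ for every $\alpha$, that is, $\phi_u$ is axially symmetric and so depends only on $(r,x_3)$.

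It remains to upgrade this symmetry to membership in $\mathcal{D}^1_\sharp$, the closure of $\mathscr{D}_\sharp$ in the $\mathcal{D}^1$ norm. To this end I introduce the averaging operator
\[
P\psi(x):=\frac{1}{2\pi}\int_0^{2\pi}\psi(R_\alpha x)\,d\alpha.
\]
I would check three facts: (i) $P$ is bounded on $\mathcal{D}^1$ with $\|P\psi\|_{\mathcal{D}^1}\le\|\psi\|_{\mathcal{D}^1}$, being an average of the isometries $\psi\mapsto\psi\circ R_\alpha$; (ii) $P$ maps $\mathscr{D}$ into $\mathscr{D}_\sharp$, because averaging a smooth, compactly supported function over the compact rotation group yields a smooth, compactly supported function depending only on $(r,x_3)$; and (iii) $P\phi_u=\phi_u$, which is exactly the axial symmetry just established. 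Choosing any sequence $\psi_n\in\mathscr{D}$ with $\psi_n\to\phi_u$ in $\mathcal{D}^1$ (possible since $\phi_u\in\mathcal{D}^1$), the continuity of $P$ together with facts (ii) and (iii) gives $\mathscr{D}_\sharp\ni P\psi_n\to P\phi_u=\phi_u$ in $\mathcal{D}^1$, whence $\phi_u\in\mathcal{D}^1_\sharp$.

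I expect the first step to be essentially routine once the invariance of \eqref{79} is written out. The delicate point is the second step: one must verify that the symmetrization $P$ genuinely produces approximants in $\mathscr{D}_\sharp$ (smoothness and compact support of the rotational average) and that $P$ is continuous in the $\mathcal{D}^1$ norm, so that the axial symmetry of $\phi_u$—which a priori is only an almost-everywhere invariance—indeed places $\phi_u$ in the closed subspace $\mathcal{D}^1_\sharp$ and not merely in the larger space of symmetric $\mathcal{D}^1$ functions.
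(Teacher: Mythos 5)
Your proof is correct, and its first step is exactly the paper's argument: the paper introduces the $O(2)$--action $(T_g u)(x)=u(gx)$, applies $T_g$ to \eqref{79}, uses $T_g(u^2)=u^2$ for $u\in\hat H^1_\sharp$, and concludes $T_g\phi_u=\phi_u$ from the uniqueness in Proposition \ref{ex} — precisely your $\phi_u\circ R_\alpha=\phi_u$. Where you genuinely go beyond the paper is the second step. The paper passes directly from the invariance $T_g\phi_u=\phi_u$ to the conclusion $\phi_u\in\mathcal{D}^1_\sharp$; but since $\mathcal{D}^1_\sharp$ is \emph{defined} as the closure of $\mathscr{D}_\sharp$ in the $\mathcal{D}^1$ norm (unlike $\hat H^1_\sharp$, which is defined as the subspace of symmetric functions), there is in principle a gap between ``$\phi_u$ is a.e.\ axially symmetric'' and ``$\phi_u$ lies in that closure''. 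Your averaging operator $P$ closes it cleanly: boundedness of $P$ on $\mathcal{D}^1$ (Minkowski's integral inequality, once one notes that $\nabla(P\psi)$ is the average of the rotated gradients), the inclusion $P(\mathscr{D})\subset\mathscr{D}_\sharp$, and $P\phi_u=\phi_u$ (a Fubini argument, since the invariance holds a.e.\ in $x$ for each fixed $\alpha$) yield $\mathscr{D}_\sharp\ni P\psi_n\to P\phi_u=\phi_u$ for any $\psi_n\in\mathscr{D}$ converging to $\phi_u$. So your version is a completed form of the paper's proof: same key mechanism (uniqueness plus rotational invariance of the equation), with the approximation step that the paper leaves implicit made rigorous.
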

The proof is an adaptation of the analogue in \cite{teadim} and is thus omitted.

By the lemma above, we can define the map
\begin{equation}\label{80}
u \in \hat{H}^1_\sharp(\R^3) \mapsto Z_{\omega}(u) = \phi_u \in
\mathcal{D}^1_\sharp \ \text{solution of \eqref{79}}.
\end{equation}
Since $\phi_u$ solves \eqref{79}, clearly we have
\begin{equation} \label{81}
d_{\phi}J(u,Z_{\omega}(u),\A)=0,
\end{equation}
where $J$ is defined in \eqref{56} and $d_{\phi}J$ denotes the
partial differential of $J$ with respect to $\phi$.

Following the lines of the proof of \cite[Proposition 2.1]{tdnonex},
using Lemma \ref{mettere}, we can easily prove the following result:
\begin{prop}\label{prop12}
The map $Z_{\omega}$ defined in \eqref{80} is of class $C^1$ and
\begin{equation}\label{sopra}
(Z_{\omega}'[u])[v]= 2q\left(\Delta - q^2u^2  \right) ^{-1} \left[
(q\phi_u -\omega)uv \right] \quad \forall\,u,v\in
\mathcal{D}^1_\sharp.
\end{equation}
\end{prop}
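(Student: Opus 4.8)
The plan is to exhibit $Z_\omega$ as an implicitly defined map and to invoke the implicit function theorem in Banach spaces. Writing \eqref{79} in weak form, I would introduce
\[
F : \hat{H}^1_\sharp(\R^3) \times \mathcal{D}^1_\sharp(\R^3) \longrightarrow \left(\mathcal{D}^1_\sharp(\R^3)\right)', \qquad \langle F(u,\phi),w\rangle = \int_{\R^3}\left[\nabla\phi\cdot\nabla w + q^2u^2\phi w - q\omega u^2 w\right]dx,
\]
so that, by Proposition \ref{ex} together with Lemma \ref{mettere}, the function $\phi = Z_\omega(u)$ is exactly the unique element of $\mathcal{D}^1_\sharp$ satisfying $F(u,Z_\omega(u))=0$. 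Once $F$ is shown to be of class $C^1$ and the partial differential $d_\phi F(u,\phi_u)$ is shown to be a topological isomorphism of $\mathcal{D}^1_\sharp$ onto its dual, the implicit function theorem gives immediately that $Z_\omega$ is $C^1$ and that $Z_\omega'[u] = -\left(d_\phi F(u,\phi_u)\right)^{-1}\circ d_u F(u,\phi_u)$.

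Two ingredients are needed. For the $C^1$ regularity of $F$, I would verify the mapping properties of its nonlinear terms through the embeddings \eqref{imme}: since $H^1\hookrightarrow L^p$ for $p\in[2,6]$ and $\mathcal{D}^1\hookrightarrow L^6$, H\"older's inequality gives $u^2,\,u^2\phi\in L^{6/5}=(L^6)'$, so that $F$ indeed takes values in $\left(\mathcal{D}^1_\sharp\right)'$; being affine in $\phi$ and quadratic in $u$, $F$ is then polynomial, hence smooth, in $(u,\phi)$. It is precisely here that Lemma \ref{mettere} is essential, as it guarantees that \eqref{79} is solvable within the symmetric subspaces and that the whole construction remains inside $\hat{H}^1_\sharp \times \mathcal{D}^1_\sharp$. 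For the invertibility, a direct computation gives
\[
\langle d_\phi F(u,\phi)[\psi],w\rangle = \int_{\R^3}\left[\nabla\psi\cdot\nabla w + q^2u^2\psi w\right]dx,
\]
whose bilinear form is continuous and coercive on $\mathcal{D}^1_\sharp$ --- it dominates $\|\psi\|_{\mathcal{D}^1}^2$ because $q^2u^2 \geq 0$ --- so Lax--Milgram yields the desired isomorphism, and its inverse is exactly the resolvent $(-\Delta + q^2u^2)^{-1}$.

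It then remains to identify the derivative. Differentiating $F$ in the $u$--direction gives $\langle d_u F(u,\phi)[v],w\rangle = \int_{\R^3} 2q(q\phi-\omega)uv\,w\,dx$, that is, $d_u F(u,\phi_u)[v]$ is represented by the function $2q(q\phi_u-\omega)uv$. Inserting this into the implicit function theorem formula and using $-(-\Delta+q^2u^2)^{-1} = (\Delta - q^2u^2)^{-1}$ produces
\[
Z_\omega'[u][v] = -(-\Delta+q^2u^2)^{-1}\left[2q(q\phi_u-\omega)uv\right] = 2q\left(\Delta - q^2u^2\right)^{-1}\left[(q\phi_u-\omega)uv\right],
\]
which is \eqref{sopra}. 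The only genuinely delicate step is the $C^1$ regularity of $F$ --- equivalently, the Fr\'echet differentiability of $(u,\phi)\mapsto u^2\phi$ as a map into $\left(\mathcal{D}^1_\sharp\right)'$ --- where the Sobolev exponents from \eqref{imme} must be balanced with care; the coercivity of $d_\phi F$ and the final algebraic computation are then routine, and the whole argument parallels \cite[Proposition 2.1]{tdnonex}.
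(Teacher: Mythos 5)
Your proposal is correct and follows essentially the same route as the paper: the paper proves Proposition \ref{prop12} by invoking the implicit function theorem argument of \cite[Proposition 2.1]{tdnonex} together with Lemma \ref{mettere}, which is exactly your scheme --- the weak-form map $F$, Lax--Milgram invertibility of $d_\phi F(u,\phi_u)$, and the algebraic identification $Z_\omega'[u]=-\bigl(d_\phi F(u,\phi_u)\bigr)^{-1}\circ d_u F(u,\phi_u)$ yielding \eqref{sopra}.
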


For $u \in H^1(\R^3)$, let $\Phi = \Phi_u$ be the solution of
\eqref{79} with $\omega = 1$; then $\Phi$ solves the equation
\begin{equation} \label{82}
 -\Delta \Phi_u + q^2u^2\Phi_u=q u^2,
\end{equation}
and clearly
\begin{equation} \label{83}
\phi_u = \omega \Phi_u.
\end{equation}
Now let $q > 0$; then, by maximum principle arguments, one can show
that for any $u \in H^1(\R^3)$ the solution $\Phi_u$ of \eqref{82}
satisfies the following estimate, first proved in \cite{mug}:
\begin{equation}\label{84}
0 \leq \Phi_u \leq \frac{1}{q}.
\end{equation}

Now, if $(u,\A) \in \hat{H}^1 \times \left(\mathcal{D}^1 \right)^3$,
we introduce the \textit{reduced action functional}
\[
\tilde{J}(u,\A) = J(u,Z_{\omega}(u),\A).
\]
Recalling that $J$ and the map $u \rightarrow Z_{\omega}(u) =
\phi_u$ are of class $C^1$ by Lemma \ref{lemma9} and Proposition
\ref{prop12}, respectively, also the functional $\tilde{J}$ is of
class $C^1$. Now, by using the chain rule and \eqref{81}, it is
standard to show that the following Lemma holds:
\begin{lemma} \label{85}
If $(u,\A)$ is a critical point of $\tilde{J}$, then
$(u,Z_{\omega}(u),\A)$ is a critical point of $J$ (and {\it
viceversa}).
\end{lemma}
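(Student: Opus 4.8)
The plan is to exploit the chain rule together with the fact, recorded in \eqref{81}, that the partial differential of $J$ in the $\phi$--direction already vanishes when evaluated at $\phi = Z_{\omega}(u)$. Since $\tilde{J}(u,\A) = J(u, Z_{\omega}(u),\A)$ and both $J$ and $Z_{\omega}$ are of class $C^1$ (by Lemma \ref{lemma9} and Proposition \ref{prop12}), $\tilde{J}$ is differentiable and its partial differentials may be computed by composition.

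First I would compute $d_u\tilde{J}$. By the chain rule, for every admissible $v$,
\[
d_u\tilde{J}(u,\A)[v] = d_uJ(u, Z_{\omega}(u),\A)[v] + d_{\phi}J(u, Z_{\omega}(u),\A)\bigl[(Z_{\omega}'[u])[v]\bigr].
\]
The second summand vanishes identically because of \eqref{81}, so $d_u\tilde{J}(u,\A)[v] = d_uJ(u, Z_{\omega}(u),\A)[v]$. Since $Z_{\omega}$ does not depend on $\A$, the variation in the $\A$--direction carries no chain--rule contribution (and any such contribution would vanish by \eqref{81} in any case), so that $d_{\A}\tilde{J}(u,\A)[\mathbf{V}] = d_{\A}J(u, Z_{\omega}(u),\A)[\mathbf{V}]$ for every $\mathbf{V} \in (\mathcal{D}^1)^3$.

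With these two identities the lemma follows in both directions. If $(u,\A)$ is a critical point of $\tilde{J}$, then the two displays give $d_uJ(u, Z_{\omega}(u),\A) = 0$ and $d_{\A}J(u, Z_{\omega}(u),\A) = 0$; combined with $d_{\phi}J(u, Z_{\omega}(u),\A) = 0$ from \eqref{81}, all three partial differentials of $J$ vanish at $(u, Z_{\omega}(u),\A)$, so this triple is a critical point of $J$. Conversely, if $(u,\phi,\A)$ is a critical point of $J$, then in particular $d_{\phi}J(u,\phi,\A) = 0$; but this equation is exactly \eqref{79}, whose solution is unique by Proposition \ref{ex}, hence necessarily $\phi = Z_{\omega}(u)$. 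The vanishing of $d_uJ$ and $d_{\A}J$ at this point then translates, via the two identities above, into $d_u\tilde{J}(u,\A) = 0$ and $d_{\A}\tilde{J}(u,\A) = 0$, i.e. $(u,\A)$ is a critical point of $\tilde{J}$.

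The computation is essentially routine; the only point requiring genuine care is the converse implication, where one must invoke the uniqueness part of Proposition \ref{ex} to identify the $\phi$--component of an arbitrary critical point of $J$ with $Z_{\omega}(u)$. I expect no substantive obstacle beyond keeping track of which variable each partial differential acts on and confirming that the ``$d_{\phi}J = 0$'' equation coincides with \eqref{79}.
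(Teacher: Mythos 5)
Your proof is correct and follows exactly the route the paper intends: the paper itself disposes of this lemma by remarking that it is ``standard to show'' via the chain rule and \eqref{81}, which is precisely the computation you carry out, with the uniqueness statement of Proposition \ref{ex} supplying the converse direction. No gap here; your write-up simply makes explicit the details the paper leaves to the reader.
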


From \eqref{82} we have
\begin{equation} \label{86} \int_{\R^3}{qu^2\Phi_u
dx} = \int_{\R^3}{|\nabla \Phi_u|^2 dx} + q^2
\int_{\R^3}{u^2\Phi_u^2dx},
\end{equation}
which is another way of writing \eqref{81}.

Now, by \eqref{83} and \eqref{86}, we have:
\[
\begin{aligned}
\tilde{J}(u,\A) &= J(u,Z_{\omega}(u),\A) = \frac{1}{2} \int_{\R^3}{ \big[ |\nabla u|^2 - |\nabla \phi_u|^2 + |\curl \A|^2 \big] dx} \\
&+ \frac{1}{2} \int_{\R^3}{\left[ |l\nabla \theta - q\A|^2 - (\omega-q\phi_u)^2  \right]u^2 dx} + \int_{\R^3}{W(u) dx} \\
&= \frac{1}{2} \int_{\R^3}{ \big[ |\nabla u|^2 + |\curl \A|^2 + |l\nabla \theta - q\A|^2u^2 \big] dx} + \int_{\R^3}{W(u)dx} \\
&-\frac{\omega^2}{2} \int_{\R^3}{(1-q\Phi_u)u^2 dx}.
\end{aligned}
\]
Then
\begin{equation}\label{88}
\tilde{J}(u,\A)= I(u,\A) - \frac{\omega^2}{2}K_q(u),
\end{equation}
where $I:\hat{H}^1 \times \left(\mathcal{D}^1 \right)^3\to \R$ and
$K_q:\hat H^1\to \R$ are defined as
\begin{equation}\label{I}
I(u,\A) = \frac{1}{2}\int_{\R^3}{\left(|\nabla u|^2 + |\curl \A|^2 +
|l\nabla \theta- q\A|^2u^2   \right) dx} + \int_{\R^3}{W(u) dx}
\end{equation}
and \begin{equation} \label{89} K_q(u) = \int_{\R^3}{(1-q\Phi_u)u^2
dx}. \end{equation}

Now, let us introduce the \textit{reduced energy functional},
defined as
\[
\hat{\mathcal{E}}(u,\A)= \mathcal{E}(u,Z_{\omega}(u),\A),
\]
where, as in \eqref{35},
\begin{equation}\label{90}
\begin{aligned}
\mathcal{E}(u,\phi,\A)&= \frac{1}{2}\int_{\R^3}{\left(|\nabla u|^2 +
|\nabla \phi|^2 + |\curl \A|^2
+ (|l\nabla \theta- q\A|^2 + (\omega - q\phi)^2)u^2   \right)dx} \\
&+ \int_{\R^3}{W(u)dx}.
\end{aligned}
\end{equation}
By using \eqref{86} and \eqref{83}, we easily find that
\begin{equation}
\label{91} \hat{\mathcal{E}}(u,\A) = I(u,\A) +
\frac{\omega^2}{2}K_q(u).
\end{equation}

Recalling \eqref{36} and \eqref{37}, we note that
\[
Q = q \sigma = q\omega K_q(u)
\]
represents the (electric) charge, so that, if  $u \neq 0$, we can
write
\[
\hat{\mathcal{E}}(u,\A) = I(u,\A) + \frac{\omega^2}{2}K_q(u) = I(u,\A) + \frac{\sigma^2}{2K_q(u)}.
\]
Then for any $\sigma \neq 0$, the functional $E_{\sigma,q}: (\hat{H}^1\setminus\{0\}) \times \left(\mathcal{D}^1 \right)^3\to \R$, defined by
\begin{equation} \label{92}
E_{\sigma,q}(u,\A) = I(u,\A) + \frac{\omega^2}{2}K_q(u) = I(u,\A) +
\frac{\sigma^2}{2K_q(u)}
\end{equation}
represents the energy on the configuration $(u,\omega \Phi_u,\A)$
having charge $Q = q\sigma$ or, equivalently, frequency $\omega =
\frac{\sigma}{K_q(u)}$.

The following lemma holds (see \cite[Lemma 13]{BF}):
\begin{lemma} \label{lemma13}
The functional
\[
\hat{H}^1 \ni u \mapsto K(u) = \int_{\R^3}{(1-q\Phi_u)u^2dx}
\]
is differentiable and for any $u,v \in \hat{H}^1$ we have
\begin{equation} \label{93}
K'(u)[v] = 2\int_{\R^3}(1-q\Phi_u)^2uv\,dx.
\end{equation}
\end{lemma}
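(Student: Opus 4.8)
The plan is to exploit that the auxiliary map $u\mapsto\Phi_u$ is already known to be of class $C^1$, and then to differentiate $K$ by the product rule, reducing the whole computation to a single self-adjointness identity obtained by cross-testing the two relevant elliptic equations.

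First I would record that $K$ is well defined and finite: by the estimate \eqref{84} one has $0\le q\Phi_u\le 1$, hence $0\le 1-q\Phi_u\le 1$, and since $u\in\hat H^1\hookrightarrow L^2$ the integrand $(1-q\Phi_u)u^2$ is dominated by $u^2\in L^1(\R^3)$. Next, taking $\omega=1$ in \eqref{80} we have $Z_1(u)=\Phi_u$, so by Proposition \ref{prop12} the map $u\mapsto\Phi_u$ is $C^1$ with
\[
\Phi_u'[v]=2q\left(\Delta-q^2u^2\right)^{-1}\!\left[(q\Phi_u-1)uv\right].
\]
Since $K$ is the composition of this $C^1$ map with the smooth quadratic $u\mapsto u^2$, integrated against the factor $(1-q\Phi_u)$, the functional $K$ is differentiable and the product rule gives
\[
K'(u)[v]=2\int_{\R^3}(1-q\Phi_u)uv\,dx-q\int_{\R^3}\Phi_u'[v]\,u^2\,dx.
\]

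The key step is to evaluate the second integral. Writing $V:=\Phi_u'[v]\in\mathcal{D}^1$, the formula above says that $V$ is the weak solution of
\[
-\Delta V+q^2u^2V=2q(1-q\Phi_u)uv,
\]
while \eqref{82} reads $qu^2=-\Delta\Phi_u+q^2u^2\Phi_u$. The two weak formulations share the \emph{same} symmetric bilinear left-hand side $\int_{\R^3}\!\big(\nabla\cdot\nabla+q^2u^2\,\cdot\big)\,dx$, which makes $-\Delta+q^2u^2$ self-adjoint on $\mathcal{D}^1$. Hence testing \eqref{82} against $V$ and the equation for $V$ against $\Phi_u$ produces identical left-hand sides, so their right-hand sides agree:
\[
q\int_{\R^3}V u^2\,dx=\int_{\R^3}\big(\nabla V\cdot\nabla\Phi_u+q^2u^2V\Phi_u\big)\,dx=2q\int_{\R^3}(1-q\Phi_u)\Phi_u\,uv\,dx,
\]
so that $\int_{\R^3}\Phi_u'[v]\,u^2\,dx=2\int_{\R^3}(1-q\Phi_u)\Phi_u\,uv\,dx$. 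Substituting this into the expression for $K'(u)[v]$ and collecting the common factor $(1-q\Phi_u)uv$ yields
\[
K'(u)[v]=2\int_{\R^3}(1-q\Phi_u)uv\,dx-2q\int_{\R^3}(1-q\Phi_u)\Phi_u\,uv\,dx=2\int_{\R^3}(1-q\Phi_u)^2uv\,dx,
\]
which is exactly \eqref{93}.

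The main obstacle is justifying the cross-testing rigorously, i.e. that $V$ and $\Phi_u$ are legitimate test functions for each other's equation and that no boundary terms survive. This reduces to checking integrability of every product that appears—$\nabla V\cdot\nabla\Phi_u$, $u^2V\Phi_u$, $(1-q\Phi_u)uv\,\Phi_u$—which follows from $\Phi_u,V\in\mathcal{D}^1\hookrightarrow L^6$, $u\in\hat H^1$, and the uniform bound \eqref{84} via the embeddings \eqref{imme}. Once finiteness is secured the self-adjointness identity is exact, and the remaining computation is the elementary factorization displayed above; I would also note that the same product-rule formula for $K'(u)[v]$ can alternatively be confirmed directly via a difference-quotient argument, using the Lipschitz dependence of $\Phi_u$ on $u$ coming from Proposition \ref{prop12}.
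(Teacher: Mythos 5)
Your argument is correct, and the first thing to say is that the paper gives no proof of this lemma at all: the statement is simply imported by citation from \cite[Lemma 13]{BF}. So your proof is a genuine, self-contained alternative to a citation, and its mechanism is sound. The product rule gives $K'(u)[v]=2\int_{\R^3}(1-q\Phi_u)uv\,dx-q\int_{\R^3}\Phi_u'[v]\,u^2\,dx$, and your cross-testing of \eqref{82} against $V=\Phi_u'[v]$ and of the equation $-\Delta V+q^2u^2V=2q(1-q\Phi_u)uv$ against $\Phi_u$ is legitimate: both functions lie in $\mathcal{D}^1$, the zeroth-order terms are integrable because $u^2\in L^{3/2}$ and $V\Phi_u\in L^3$, and both right-hand sides lie in $L^{6/5}=(L^6)'$ since $u,v\in H^1\hookrightarrow L^{12/5}$ and $0\le 1-q\Phi_u\le 1$ by \eqref{84}. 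This produces exactly the cancellation turning $(1-q\Phi_u)$ into $(1-q\Phi_u)^2$, i.e. \eqref{93}. Two remarks. First, a citation-level slip: Proposition \ref{prop12} is stated only on the cylindrically symmetric spaces (the map \eqref{80} is defined on $\hat H^1_\sharp$), whereas Lemma \ref{lemma13} concerns all $u,v\in\hat H^1$; you should instead invoke the general $H^1$ version of that differentiability result, namely \cite[Proposition 2.1]{tdnonex}, on which Proposition \ref{prop12} is modelled --- the formula and your computation are otherwise unchanged. Second, the same cancellation admits a slicker packaging that avoids the resolvent formula for $\Phi_u'$ altogether: since $\phi_u=Z_\omega(u)$ is characterized by $d_\phi J(u,Z_\omega(u),\A)=0$ (see \eqref{81}), the chain rule applied to $\tilde J(u,\A)=J(u,Z_\omega(u),\A)$ kills the term involving $Z_\omega'$, so comparing the decomposition \eqref{88} with the direct $u$-differentiation of $J$ yields $\frac{\omega^2}{2}K_q'(u)[v]=\int_{\R^3}(\omega-q\phi_u)^2uv\,dx$, which is \eqref{93} after substituting $\phi_u=\omega\Phi_u$ from \eqref{83}. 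Your self-adjointness identity is precisely this variational fact unwound by hand; what your version buys is that it treats $K_q$ in isolation, independently of the functional $J$, while the variational route buys brevity and needs only \eqref{81}.
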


Introducing $E_{\sigma,q}$ turns out to be a useful choice, as the
following easy consequence shows (see \cite[Proposition 14]{BF}):
\begin{prop} \label{prop14}
Let $\sigma \neq 0$ and let $(u,\A) \in \hat{H}^1 \times
(\mathcal{D}^1)^3$, $u \neq 0$ be a critical point of
$E_{\sigma,q}$. Then, if we set $\omega = \frac{\sigma}{K_q(u)}$,
$(u,Z_{\omega}(u),\A)$ is a critical point of $J$.
\end{prop}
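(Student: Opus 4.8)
The plan is to reduce the statement to Lemma \ref{85} by showing that a critical point of $E_{\sigma,q}$ is, once the frequency is fixed through the prescribed relation $\omega=\sigma/K_q(u)$, automatically a critical point of the reduced functional $\tilde{J}$. The two expressions to keep in mind are the factorization \eqref{88}, namely $\tilde{J}(u,\A)=I(u,\A)-\frac{\omega^2}{2}K_q(u)$ with $\omega$ treated as a fixed parameter, and the definition \eqref{92}, $E_{\sigma,q}(u,\A)=I(u,\A)+\frac{\sigma^2}{2K_q(u)}$, with $\sigma$ the fixed datum. First I would observe that, since $0\le q\Phi_u\le 1$ by \eqref{84} (and $K_q(u)=\|u\|_2^2$ when $q=0$), while constants do not belong to $\mathcal{D}^1(\R^3)$, the quantity $K_q(u)=\int_{\R^3}(1-q\Phi_u)u^2\,dx$ is strictly positive whenever $u\neq 0$; hence $\omega=\sigma/K_q(u)$ is well defined, and $E_{\sigma,q}$ is of class $C^1$ near the given critical point, its differentiability being inherited from that of $I$ (Lemma \ref{lemma9}) and of $K_q$ (Lemma \ref{lemma13}).

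The core of the argument is a direct comparison of differentials at the given point $(u,\A)$. Since neither $K_q$ nor the term $\frac{\sigma^2}{2K_q(u)}$ depends on $\A$, differentiating \eqref{92} and \eqref{88} in the $\A$ variable gives at once $d_{\A}E_{\sigma,q}(u,\A)=d_{\A}I(u,\A)=d_{\A}\tilde{J}(u,\A)$. For the $u$ variable I would use Lemma \ref{lemma13} to write $d_uE_{\sigma,q}(u,\A)[v]=d_uI(u,\A)[v]-\frac{\sigma^2}{2K_q(u)^2}K'(u)[v]$, and compare it with $d_u\tilde{J}(u,\A)[v]=d_uI(u,\A)[v]-\frac{\omega^2}{2}K'(u)[v]$. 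The choice $\omega=\sigma/K_q(u)$ is exactly what forces $\frac{\sigma^2}{2K_q(u)^2}=\frac{\omega^2}{2}$, so the two differentials coincide at this point. Consequently $dE_{\sigma,q}(u,\A)=d\tilde{J}(u,\A)=0$, i.e.\ $(u,\A)$ is a critical point of $\tilde{J}$ for the frequency $\omega=\sigma/K_q(u)$, and Lemma \ref{85} then yields that $(u,Z_{\omega}(u),\A)$ is a critical point of $J$.

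The one point that requires care --- and which I regard as the only real subtlety --- is the legitimacy of differentiating $\tilde{J}$ through the clean formula \eqref{88} rather than through its definition $\tilde{J}(u,\A)=J(u,Z_{\omega}(u),\A)$. Here the implicit dependence of $Z_{\omega}(u)=\phi_u$ on $u$ does not contribute to $d_u\tilde{J}$ precisely because $\phi_u$ solves \eqref{79}, i.e.\ $d_{\phi}J(u,Z_{\omega}(u),\A)=0$ by \eqref{81}; this is the same mechanism underlying Lemma \ref{85}, and it is what makes \eqref{88} a genuine identity of $C^1$ functionals on $\hat{H}^1\times(\mathcal{D}^1)^3$ that may be differentiated termwise. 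Once this is granted, the verification above is purely algebraic, the essential mechanism being that fixing the charge $\sigma$ and fixing the frequency $\omega$ produce the same Euler--Lagrange equations exactly at the configuration where $\omega=\sigma/K_q(u)$.
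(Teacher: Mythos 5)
Your proposal is correct and is precisely the argument the paper has in mind: the paper itself gives no proof, presenting the statement as an ``easy consequence'' with a citation to \cite[Proposition 14]{BF}, and the intended reasoning is exactly your comparison of differentials, namely that $d_{\A}E_{\sigma,q}=d_{\A}\tilde{J}$ trivially, while $d_u E_{\sigma,q}(u,\A)[v]=d_uI(u,\A)[v]-\frac{\sigma^2}{2K_q(u)^2}K'(u)[v]$ coincides with $d_u\tilde{J}(u,\A)[v]$ once $\omega=\sigma/K_q(u)$, so that criticality for $E_{\sigma,q}$ gives criticality for $\tilde{J}$ and Lemma \ref{85} concludes. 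Your additional care about $K_q(u)>0$ for $u\neq 0$ and about differentiating $\tilde{J}$ through the identity \eqref{88} (justified by \eqref{81} and Lemma \ref{lemma13}) is sound and, if anything, makes explicit what the paper leaves implicit.
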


Therefore, by Proposition \ref{prop14} and Theorem \ref{th10} we are
reduced to study the critical points of $E_{\sigma,q}$, which is a
functional bounded from below, since all its components are
nonnegative.

However $E_{\sigma,q}$ contains the term $\int_{\R^3}{|\curl
\A|^2}$, which is not a Sobolev norm in $\left(\mathcal{D}^1
\right)^3$. In order to avoid consequent difficulties, we introduce
a suitable manifold $V \subset \hat{H}^1 \times \left(\mathcal{D}^1
\right)^3$ in the following way: first, we set
\[
\mathcal{A}_0 := \Big\{ \mathbf{X} \in C^\infty_C(\R^3 \setminus
\Sigma, \R^3): \mathbf{X} = b(r,z)\nabla \theta; \ b \in
C^\infty_C(\R^3 \setminus \Sigma, \R) \Big\},
\]
and we denote by $\mathcal{A}$ the closure of $\mathcal{A}_0$ with
respect to the norm of $\left( \mathcal{D}^1 \right)^3$. We now
consider the space
\beq \label{forV}
V := \hat{H}^1_\sharp \times
\mathcal{A},
\eeq
and we set $U = (u,\A)\in V$ with
\[
\|U\|_V = \|(u,\A)\|_V = \|u\|_{\hat{H}^1_\sharp} + \|\A\|_{(
\mathcal{D}^1 )^3}.
\]

We need the following result, for whose proof see \cite[Lemma
15]{BF}:
\begin{lemma} \label{lemma15}
If $\A \in \mathcal{A}$, then
\[
\int_{\R^3}{|\curl \A|^2 dx} = \int_{\R^3}{|\nabla \A|^2 dx}.
\]
\end{lemma}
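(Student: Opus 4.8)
The plan is to prove the identity first on the smooth core $\mathcal{A}_0$ and then pass to its closure $\mathcal{A}$ by density and continuity. The analytic heart is the pointwise identity, valid for every smooth field $\A$,
\[
|\curl \A|^2 = |\nabla \A|^2 - (\operatorname{div}\A)^2 + \operatorname{div}\bigl[(\operatorname{div}\A)\,\A - (\A\cdot\nabla)\A\bigr],
\]
which I would obtain by writing $(\curl\A)_i=\varepsilon_{ijk}\partial_jA_k$, contracting two Levi--Civita symbols via $\varepsilon_{ijk}\varepsilon_{ilm}=\delta_{jl}\delta_{km}-\delta_{jm}\delta_{kl}$ to get $|\curl\A|^2=|\nabla\A|^2-\partial_jA_k\,\partial_kA_j$, and then rewriting the cross term $\partial_jA_k\,\partial_kA_j$ as $\partial_j(A_k\partial_kA_j)-A_k\partial_k(\operatorname{div}\A)$, i.e. as a total divergence plus a $(\operatorname{div}\A)^2$ contribution.

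The decisive observation is that every $\A\in\mathcal{A}_0$ is divergence free. Indeed, writing $\A=b(r,z)\nabla\theta$ we have $\operatorname{div}\A=\nabla b\cdot\nabla\theta+b\,\operatorname{div}(\nabla\theta)$; a direct computation gives $\operatorname{div}(\nabla\theta)=0$ off $\Sigma$, while $\nabla b\cdot\nabla\theta=0$ because $b$ depends only on $(r,z)$, so $\nabla b$ lies in the span of $\nabla r=(x_1/r,x_2/r,0)$ and $\ee_3$, both of which are orthogonal to $\nabla\theta=(x_2/r^2,-x_1/r^2,0)$. Since moreover $\A$ is smooth with compact support in $\R^3\setminus\Sigma$, integrating the identity above over $\R^3$ makes the divergence term vanish (the integrand is a smooth, compactly supported total divergence), while the $(\operatorname{div}\A)^2$ term is identically zero. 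This yields $\int_{\R^3}|\curl\A|^2\,dx=\int_{\R^3}|\nabla\A|^2\,dx$ for every $\A\in\mathcal{A}_0$.

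Finally I would extend the equality to all of $\mathcal{A}$ by density. The right-hand side is just $\|\A\|_{(\mathcal{D}^1)^3}^2$, hence continuous on $(\mathcal{D}^1)^3$; the left-hand side is continuous as well, since $\A\mapsto\curl\A$ is a bounded linear map from $(\mathcal{D}^1)^3$ into $(L^2)^3$ (each component of $\curl\A$ is a difference of two first derivatives, so $|\curl\A|^2\le 2|\nabla\A|^2$ pointwise). As both functionals are continuous and agree on the dense set $\mathcal{A}_0$, they agree on the closure $\mathcal{A}$. I expect the main subtlety to be precisely the legitimacy of the integration by parts: it is essential to run the computation on the smooth, compactly supported, $\Sigma$-avoiding fields of $\mathcal{A}_0$ --- where no boundary term survives at the singular axis or at infinity --- rather than directly on a general element of $\mathcal{A}$, for which $\operatorname{div}\A$ and the pointwise identity need not make classical sense.
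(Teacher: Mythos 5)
Your proof is correct and complete: the divergence-freeness of $b(r,z)\nabla\theta$ off $\Sigma$, the Levi--Civita identity reducing $|\curl \A|^2$ to $|\nabla \A|^2$ minus $(\operatorname{div}\A)^2$ plus a total divergence, and the final density/continuity argument on $\left(\mathcal{D}^1\right)^3$ all check out. Note that the paper itself gives no proof of Lemma \ref{lemma15}, deferring entirely to \cite[Lemma 15]{BF}; your argument is precisely the standard one used there, so it matches the intended proof rather than offering a genuinely different route.
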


Working in $V$ has two advantages: first, the components $\A$ of the
elements in $V$ are divergence free, so that the term
$\int_{\R^3}{|\curl \A|^2}$ can be replaced by
$\|\A\|^2_{(\mathcal{D}^1)^3} = \int_{\R^3}{|\nabla \A|^2}$. Second,
the critical points of $J$ constrained on $V$ satisfy system
\eqref{39a}--\eqref{40a}--\eqref{41a}; namely $V$ is a ``natural
constraint'' for $J$.

\section{Proof of Theorem \ref{main}}
In this section we shall always assume that $W$ satisfies $W1$),
$W2)$, $W3)$, $W4)$ and we will show that $E_{\sigma,q}$ constrained on $V$ as in \eqref{forV}
has a minimum which is a nontrivial solution of system
\eqref{39a}--\eqref{40a}--\eqref{41a}.

We start with the following {\it a priori} estimate on minimizing
sequences, whose proof  is similar to the proof of \cite[Lemma 18]{BF}:
\begin{lemma}\label{lemma18}
For any $\sigma, q>0$, any minimizing sequence $(u_n,\A_n) \subset
V$ for $E_{\sigma,q}|_V$ is bounded in $\hat{H}^1 \times \left(
\mathcal{D}^1 \right)^3$.
\end{lemma}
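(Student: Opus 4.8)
The plan is to exploit that $E_{\sigma,q}$ is a sum of nonnegative pieces, so that a uniform bound $E_{\sigma,q}(u_n,\A_n)\le C$ forces each piece to be bounded separately, and then to upgrade these partial bounds into control of the full $\hat H^1\times(\mathcal{D}^1)^3$ norm. Writing out \eqref{92} and \eqref{I}, and using Lemma \ref{lemma15} to replace $\int_{\R^3}|\curl\A_n|^2$ by $\|\A_n\|_{(\mathcal{D}^1)^3}^2$, the bound $E_{\sigma,q}(u_n,\A_n)\le C$ immediately gives $\|\nabla u_n\|_2^2\le 2C$, $\|\A_n\|_{(\mathcal{D}^1)^3}^2\le 2C$, $\int_{\R^3}W(u_n)\le C$ and $\int_{\R^3}|l\nabla\theta-q\A_n|^2u_n^2\le 2C$, since $W\ge 0$ by $W1)$, $1-q\Phi_u\ge 0$ by \eqref{84}, and every integrand is nonnegative. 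In particular $\A_n$ is already bounded in $(\mathcal{D}^1)^3$, and by the Sobolev embedding $\mathcal{D}^1\hookrightarrow L^6$ from \eqref{imme} the gradient bound yields $\|u_n\|_6\le S\|\nabla u_n\|_2\le C'$.

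It remains to bound $\|u_n\|_2$ and the weighted term $\int_{\R^3}\frac{l^2}{r^2}u_n^2$. For the first I would use $W2)$ and $W3)$: writing $W=\frac{m^2}{2}s^2+N$ as in \eqref{n1a} and integrating $|N'(s)|\le c_1|s|^{\ell-1}+c_2|s|^{p-1}$ from $0$ gives $|N(s)|\le \frac{c_1}{\ell}|s|^\ell+\frac{c_2}{p}|s|^p$, whence
\[
\frac{m^2}{2}\|u_n\|_2^2=\int_{\R^3}W(u_n)-\int_{\R^3}N(u_n)\le C+\frac{c_1}{\ell}\|u_n\|_\ell^\ell+\frac{c_2}{p}\|u_n\|_p^p.
\]
Since $2<\ell\le p<6$, interpolation between $L^2$ and $L^6$ gives $\|u_n\|_s^s\le\|u_n\|_2^{(6-s)/2}\|u_n\|_6^{3(s-2)/2}$ for $s\in\{\ell,p\}$; using the bound on $\|u_n\|_6$ this turns the right-hand side into $C+C\bigl(\|u_n\|_2^{(6-\ell)/2}+\|u_n\|_2^{(6-p)/2}\bigr)$, with both exponents strictly less than $2$. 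Comparing growth rates forces $\|u_n\|_2$ to stay bounded. This is the step where the subcriticality $p<6$ is essential, and it is the standard Berestycki--Lions mechanism.

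For the weighted term, the subtlety — and the main obstacle — is that the magnetic energy controls $\int_{\R^3}|l\nabla\theta-q\A_n|^2u_n^2$, which for $\A_n=a_n\nabla\theta\in\mathcal{A}$ equals $\int_{\R^3}\frac{(l-qa_n)^2}{r^2}u_n^2$, rather than $\int_{\R^3}\frac{l^2}{r^2}u_n^2$ itself. I would resolve this by the triangle inequality in $L^2(\R^3)$: since $|\nabla\theta|^2=1/r^2$,
\[
\Bigl(\int_{\R^3}\tfrac{l^2}{r^2}u_n^2\Bigr)^{1/2}=\|l\nabla\theta\,u_n\|_2\le\|(l\nabla\theta-q\A_n)u_n\|_2+q\|\A_n u_n\|_2,
\]
where the first term is $\le\sqrt{2C}$. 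The last term is controlled by Hölder and Sobolev, $\int_{\R^3}|\A_n|^2u_n^2\le\|\A_n\|_6^2\|u_n\|_3^2$, with $\|\A_n\|_6\le C\|\A_n\|_{(\mathcal{D}^1)^3}$ already bounded and $\|u_n\|_3\le\|u_n\|_2^{1/2}\|u_n\|_6^{1/2}$ bounded by the previous step.

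Putting these together bounds $\int_{\R^3}\frac{l^2}{r^2}u_n^2$, so that $\|u_n\|_{\hat H^1}^2=\|\nabla u_n\|_2^2+\|u_n\|_2^2+\int_{\R^3}\frac{l^2}{r^2}u_n^2$ and $\|\A_n\|_{(\mathcal{D}^1)^3}$ are both bounded, which is the claim. I note that the term $\frac{\sigma^2}{2K_q(u_n)}$ yields only the lower bound $K_q(u_n)\ge\sigma^2/(2C)$ and therefore plays no role in the boundedness argument; it is discarded here.
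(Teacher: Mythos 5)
Your proof is correct, but the central step --- the $L^2$ bound --- follows a genuinely different route from the paper's. Both arguments begin identically: since every piece of $E_{\sigma,q}$ is nonnegative (by $W1)$, \eqref{84} and Lemma \ref{lemma15}), the energy bound controls $\|\nabla u_n\|_2$, $\|\A_n\|_{(\mathcal{D}^1)^3}$, $\int W(u_n)$ and $\int|l\nabla\theta-q\A_n|^2u_n^2$, and the Sobolev embedding bounds $\|u_n\|_6$. To bound $\|u_n\|_2$, you integrate $W3)$ to get $|N(s)|\le \frac{c_1}{\ell}|s|^\ell+\frac{c_2}{p}|s|^p$ and close the estimate by $L^2$--$L^6$ interpolation, exploiting that the exponents $(6-\ell)/2$ and $(6-p)/2$ are strictly below $2$; this is the Berestycki--Lions coercivity mechanism, and it genuinely needs the growth restriction in $W3)$. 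The paper instead splits $\R^3$ into the level sets $\Omega_n=\{|u_n|>\epsilon\}$ and $\Omega_n^c$: on $\Omega_n^c$ it uses only $W2)$ (for $\epsilon$ small, $W(s)\ge c\,s^2$ when $|s|\le\epsilon$), while $\mathrm{meas}(\Omega_n)$ is bounded by Chebyshev through the $L^6$ bound, and H\"older on $\Omega_n$ finishes. Thus the paper's argument for the $L^2$ bound uses only $W1)$ and $W2)$ and would survive even with no polynomial control on $N$ at all, whereas yours leans on $W3)$ --- harmless here, since $W3)$ is a standing assumption of that section, but a genuine difference in robustness. Conversely, your treatment of the weighted term $\int l^2u_n^2/r^2$ (triangle inequality in $L^2$ together with $\int|\A_n|^2u_n^2\le\|\A_n\|_6^2\|u_n\|_3^2$ and $\|u_n\|_3\le\|u_n\|_2^{1/2}\|u_n\|_6^{1/2}$) is spelled out in full, whereas the paper defers exactly this step to \cite{BF}; in that respect your write-up is the more self-contained of the two.
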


\begin{prop} \label{prop21}
For any $\sigma, q > 0$ there exists a minimizing sequence $U_n =
(u_n,\A_n)$ of $E_{\sigma,q}|_V$, with $u_n \geq 0$ and which is
also a Palais--Smale sequence for $E_{\sigma,q}$, i.e.
\[
E'_{\sigma,q}(u_n,\A_n) \rightarrow 0.
\]
\end{prop}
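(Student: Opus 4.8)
The plan is to produce the sequence by Ekeland's variational principle, building the sign condition directly into the metric space on which the principle is applied. As already observed after Proposition \ref{prop14}, $E_{\sigma,q}|_V$ is bounded below because each of its terms is nonnegative; moreover, by Lemma \ref{lemma9}, Proposition \ref{prop12} and Lemma \ref{lemma13} it is of class $C^1$ on the open set $\{(u,\A)\in V:u\neq0\}$, the map $u\mapsto\sigma^2/(2K_q(u))$ being smooth wherever $K_q(u)\neq0$, i.e. wherever $u\neq0$. The decisive structural remark is that $E_{\sigma,q}(|u|,\A)=E_{\sigma,q}(u,\A)$: indeed $E_{\sigma,q}$ depends on $u$ only through $|\nabla u|^{2}$, through $u^{2}$ (note that $K_q(u)$ and $\Phi_u$ depend on $u$ only via $u^{2}$, by \eqref{82} and \eqref{89}) and through $W(u)=W(|u|)$, while $|\nabla|u||=|\nabla u|$ a.e. Hence, setting $\mathcal C:=\{u\in\hat{H}^1_\sharp:u\geq0\}\times\mathcal A$, which is a closed, hence complete, subset of $V$, we have $\inf_{\mathcal C}E_{\sigma,q}=\inf_{V}E_{\sigma,q}$.

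First I would apply Ekeland's variational principle to $E_{\sigma,q}$, extended by $+\infty$ on $\{u=0\}$, on the complete metric space $\mathcal C$ (the extended functional is proper, lower semicontinuous and bounded below there). This yields a minimizing sequence $(u_n,\A_n)\in\mathcal C$, with $u_n\geq0$, such that
\[
E_{\sigma,q}(w,\mathbf{B})\ \geq\ E_{\sigma,q}(u_n,\A_n)-\varepsilon_n\,\|(w,\mathbf{B})-(u_n,\A_n)\|_V\qquad\forall\,(w,\mathbf{B})\in\mathcal C,
\]
with $\varepsilon_n\to0$. Since $K_q(u)\to0$, and hence $E_{\sigma,q}\to+\infty$, as $u\to0$, the finiteness of $\inf_V E_{\sigma,q}$ forces $\inf_n K_q(u_n)>0$; thus $u_n$ stays bounded away from $0$, the functional is $C^1$ in a neighbourhood of each $(u_n,\A_n)$, and by Lemma \ref{lemma18} the sequence is bounded in $\hat{H}^1\times(\mathcal D^1)^3$.

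It remains to convert the one-sided inequality above into the free Palais--Smale condition $E'_{\sigma,q}(u_n,\A_n)\to0$ in $V'$. For the magnetic component this is immediate, since $\mathcal A$ is a linear space: taking $(u_n,\A_n\pm t\mathbf{B})\in\mathcal C$ and letting $t\to0$ gives $\|\partial_{\A}E_{\sigma,q}(u_n,\A_n)\|\leq\varepsilon_n$. For the matter component I would test with the admissible variations $((u_n+th)^+,\A_n)$ and $((u_n-th)^+,\A_n)$ for $h\in\hat{H}^1_\sharp$ and $t\to0^{+}$, writing $(u_n\pm th)^+=u_n\pm th+g_t^{\pm}$ with $g_t^{\pm}=(u_n\pm th)^-\geq0$ supported where $u_n$ is small, and combining the two resulting estimates to obtain $|\partial_u E_{\sigma,q}(u_n,\A_n)[h]|\leq C\varepsilon_n\|h\|_{\hat H^1}$ for every $h$. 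This last step is the main obstacle: because the constraint $u\geq0$ can be active on $\{u_n=0\}$, the cone minimizing sequence a priori only satisfies a variational inequality, and one must show that the associated obstacle reaction is asymptotically negligible. This is exactly where the structure of the problem enters — the hypothesis $W'(0)=0$ from $W2)$ (so that no zeroth--order term survives on $\{u_n=0\}$), the pointwise bound $0\leq q\Phi_{u_n}\leq1$ from \eqref{84}, and the boundedness from Lemma \ref{lemma18} must all be used to control the correction terms $\partial_u E_{\sigma,q}(u_n,\A_n)[g_t^{\pm}]$ as $t\to0^{+}$.
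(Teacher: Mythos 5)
Your route diverges from the paper's at the very first step, and the divergence creates a problem your sketch does not resolve. The paper's proof is: take any minimizing sequence, replace $u_n$ by $|u_n|$ (this costs nothing since, as you yourself observe, $E_{\sigma,q}$ depends on $u$ only through $|\nabla u|^2$, $u^2$ and $W(u)=W(|u|)$, see \eqref{90}), and \emph{then} apply Ekeland's principle on $V$, so that no sign constraint is ever imposed on the space and the Ekeland inequality can be differentiated in arbitrary directions of $V$. By instead building $u\geq 0$ into the domain and applying Ekeland on the cone $\mathcal C$, you trade this for an obstacle problem: the Ekeland inequality on $\mathcal C$ yields only a variational inequality, and passing to $\partial_u E_{\sigma,q}(u_n,\A_n)[h]\to 0$ for arbitrary sign-changing $h$ requires proving that the constraint reaction supported on $\{u_n=0\}$ is asymptotically negligible. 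You correctly identify this as ``the main obstacle'' and list the ingredients that ``must be used,'' but you never carry out the estimate; since this is precisely the hard step of your chosen route (the terms $\frac1t\int_{\{u_n+th<0\}}|\nabla u_n|^2\,dx$ and their analogues have a sign but no obvious smallness), the proof is incomplete as written. To be fair, your cone device attacks a point the paper itself treats lightly -- Ekeland's output need not remain nonnegative after the replacement $u_n\mapsto|u_n|$ -- but an attack that stops at the decisive estimate does not close the argument.

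The second gap is one you do not flag, and it is the more serious. Even granting the cone step, you only control the derivative along directions belonging to $V$: you test with $h\in\hat H^1_\sharp$ and $\mathbf{B}\in\mathcal A$, and indeed you state your target as ``$E'_{\sigma,q}(u_n,\A_n)\to 0$ in $V'$.'' That is the Palais--Smale condition for the \emph{restricted} functional $E_{\sigma,q}|_V$, not the conclusion of the proposition, which is the free condition: $E'_{\sigma,q}(u_n,\A_n)\to0$ in the dual of the whole space $\hat H^1\times(\mathcal D^1)^3$, i.e. vanishing of the derivative also along non-cylindrically-symmetric $v\in\hat H^1$ and along vector fields $\mathbf{V}\in(\mathcal D^1)^3$ that are not of the form $b(r,x_3)\nabla\theta$. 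This is exactly the content of the paper's closing step, which invokes the technique of Theorem 16 of \cite{11} (a symmetric-criticality-type argument showing that $V$ is a natural constraint), and it is not cosmetic: the full-space condition is what Proposition \ref{prop23} later exploits, testing with arbitrary $w\in\hat H^1$ and $\mathbf{w}\in(C^\infty_C)^3$. Without this step your sequence is only known to be asymptotically critical ``along $V$,'' and the proposition is not proved.
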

\begin{proof}
Let $(u_n,\A_n) \subset V$ be a minimizing sequence for
$E_{\sigma,q}|_V$. It is not restrictive to assume that $u_n \geq
0$. Otherwise, we can replace $u_n$ with $|u_n|$ and we still have a
minimizing sequence (see \eqref{90}). By Ekeland's
Variational Principle (see  \cite{ekel}) we can also
assume that $(u_n,\A_n)$ is a Palais--Smale sequence for
$E_{\sigma,q}|_V$, namely we can assume that
\[
E'_{\sigma,q}|_V(u_n,\A_n) \rightarrow 0.
\]
By using the same technique used to prove Theorem 16 in \cite{11},
it follows that $(u_n,\A_n)$ is a Palais--Smale sequence also for
$E_{\sigma,q}$, that is
\[
E'_{\sigma,q}(u_n,\A_n) \rightarrow 0.
\]
\end{proof}

A fundamental tool in proving the existence result, is given by the
following
\begin{lemma}\label{soprasotto}
For any $\sigma, q>0$ and for any minimizing sequence $(u_n,\A_n)
\subset V$ for $E_{\sigma,q}|_V$, there exist positive numbers
$a_1<a_2$ such that
\[
a_1\leq \int_{\R^3}(1-q\Phi_{u_n})u_n^2dx\leq a_2\mbox{ for every
}n\in \N
\]
and
\[
a_1\leq \int_{\R^3}u_n^2dx\leq a_2\mbox{ for every }n\in \N.
\]
\end{lemma}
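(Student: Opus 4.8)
The plan is to derive both chains of inequalities from two facts already at our disposal: the pointwise bound $0\le q\Phi_{u_n}\le 1$ coming from \eqref{84}, and the $L^2$-boundedness of minimizing sequences established in Lemma \ref{lemma18}. The crucial structural observation is that $0\le 1-q\Phi_{u_n}\le 1$ couples the two quantities: since $K_q(u_n)=\int_{\R^3}(1-q\Phi_{u_n})u_n^2\,dx$ has an integrand squeezed between $0$ and $u_n^2$, one has the universal double inequality
\[
0\le \int_{\R^3}(1-q\Phi_{u_n})u_n^2\,dx \le \int_{\R^3}u_n^2\,dx
\]
for every $n$. Thus a lower bound for $K_q(u_n)$ will automatically produce a lower bound for $\int_{\R^3}u_n^2\,dx$, while an upper bound for $\int_{\R^3}u_n^2\,dx$ will produce one for $K_q(u_n)$.

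First I would establish the upper bounds. By Lemma \ref{lemma18} the minimizing sequence $(u_n,\A_n)$ is bounded in $\hat{H}^1\times(\mathcal{D}^1)^3$; since $\hat{H}^1\hookrightarrow H^1\hookrightarrow L^2(\R^3)$, the quantity $\int_{\R^3}u_n^2\,dx$ is bounded by some constant $a_2>0$. By the displayed double inequality, $K_q(u_n)\le \int_{\R^3}u_n^2\,dx \le a_2$ as well.

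The lower bound on $K_q(u_n)$ is where the minimizing character of the sequence enters. Since every term of $E_{\sigma,q}$ is nonnegative, the functional is bounded below by $0$, so its infimum over $V$ is finite; being a minimizing sequence, $E_{\sigma,q}(u_n,\A_n)$ converges and is in particular bounded above by some constant $C$. Using the decomposition \eqref{92} together with $I\ge 0$,
\[
\frac{\sigma^2}{2K_q(u_n)}\le I(u_n,\A_n)+\frac{\sigma^2}{2K_q(u_n)}=E_{\sigma,q}(u_n,\A_n)\le C,
\]
whence $K_q(u_n)\ge \sigma^2/(2C)=:a_1>0$ for all $n$. Combining this with the double inequality gives $\int_{\R^3}u_n^2\,dx\ge K_q(u_n)\ge a_1$, which is the remaining lower bound; after relabelling $a_1,a_2$ so that $a_1<a_2$, both estimates hold simultaneously for $K_q(u_n)$ and for $\int_{\R^3}u_n^2\,dx$.

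I do not expect a serious obstacle: the whole argument rests on the coupling inequality $0\le K_q(u_n)\le\int_{\R^3}u_n^2\,dx$ and on the blow-up of the term $\sigma^2/(2K_q(u_n))$ as $K_q(u_n)\to 0$. The only point requiring care is to guarantee that $\inf_V E_{\sigma,q}$ is finite, so that the defining property of a minimizing sequence genuinely forces a uniform upper bound $C$ on the energy; this is precisely the nonnegativity of all the terms of $E_{\sigma,q}$ noted just after \eqref{92}.
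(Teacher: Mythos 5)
Your proof is correct and follows essentially the same route as the paper: the upper bounds come from Lemma \ref{lemma18} together with the pointwise bound \eqref{84}, and the lower bound on $K_q(u_n)$ comes from the boundedness of $E_{\sigma,q}(u_n,\A_n)$ along the minimizing sequence via the decomposition \eqref{92}, with the inequality $0\le 1-q\Phi_{u_n}\le 1$ transferring it to $\int_{\R^3}u_n^2\,dx$. The paper's argument is just a terser version of exactly this reasoning.
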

\begin{proof}
The upper bounds are an obvious consequence of Lemma \ref{lemma18}
and of \eqref{84}, so that we only prove the lower bounds.

Since $E_{\sigma,q}(u_n,\A_n)\to \inf_VE_{\sigma,q}$, from
\eqref{92} we immediately get that there exists $a_1>0$ such that
\[
\frac{1}{\int_{\R^3}(1-q\Phi_{u_n})u_n^2dx}\leq \frac{1}{a_1} \mbox{
for every }n\in \N,
\]
and thus all the claims follow.
\end{proof}

As a corollary of the previous Lemma, we have the following result,
whose proof is now very easy, but whose consequences are crucial:
\begin{lemma}\label{inf0}
For any $\sigma,q>0$
\[
\inf_V E_{\sigma,q}>0.
\]
\end{lemma}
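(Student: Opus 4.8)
The plan is to show that the infimum of $E_{\sigma,q}$ over $V$ is strictly positive by deriving a quantitative lower bound from the estimates already established. The key observation is that by \eqref{92}, for any $(u,\A)\in V$ we have
\[
E_{\sigma,q}(u,\A) = I(u,\A) + \frac{\sigma^2}{2K_q(u)} \geq \frac{\sigma^2}{2K_q(u)},
\]
since $I(u,\A)\geq 0$ because all its terms are nonnegative (recall $W\geq 0$ by $W1)$). Thus the whole matter reduces to producing a uniform \emph{upper} bound on $K_q(u)$ along a minimizing sequence, because an upper bound $K_q(u)\leq a_2$ immediately yields $E_{\sigma,q}(u,\A)\geq \sigma^2/(2a_2) > 0$.

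First I would fix a minimizing sequence $(u_n,\A_n)\subset V$ for $E_{\sigma,q}|_V$; such a sequence exists and is bounded in $\hat H^1\times(\mathcal{D}^1)^3$ by Lemma \ref{lemma18}. Then I would invoke Lemma \ref{soprasotto}, which provides precisely the uniform upper bound
\[
K_q(u_n) = \int_{\R^3}(1-q\Phi_{u_n})u_n^2\,dx \leq a_2 \quad\mbox{for every }n\in\N.
\]
Combining this with the displayed inequality above gives $E_{\sigma,q}(u_n,\A_n)\geq \sigma^2/(2a_2)$ for every $n$. Passing to the limit as $n\to\infty$, and using that $(u_n,\A_n)$ is minimizing, yields $\inf_V E_{\sigma,q}\geq \sigma^2/(2a_2)>0$, which is the claim.

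There is really no serious obstacle here: the statement is, as the authors remark, an easy corollary of Lemma \ref{soprasotto}, and the only work is to assemble the pieces in the right order. The one point deserving a word of care is that the lower bound on $E_{\sigma,q}$ must be extracted \emph{uniformly along a minimizing sequence} rather than pointwise on all of $V$, since $K_q(u)$ is not globally bounded on $\hat H^1$; this is exactly why Lemma \ref{soprasotto} is phrased for minimizing sequences and why its uniform constant $a_2$ is the crucial input. Given that $\sigma>0$ and $a_2<\infty$ are both fixed positive quantities, the conclusion $\inf_V E_{\sigma,q}\geq \sigma^2/(2a_2)>0$ follows at once.
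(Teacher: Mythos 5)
Your proof is correct and rests on exactly the same ingredients as the paper's: the decomposition \eqref{92}, the nonnegativity of $I$, and the uniform bounds of Lemma \ref{soprasotto} along a minimizing sequence. The only difference is presentational — the paper assumes $\inf_V E_{\sigma,q}=0$ and derives $\int_{\R^3}u_n^2\,dx\to\infty$, contradicting Lemma \ref{soprasotto}, whereas you run the same argument directly and obtain the explicit lower bound $\inf_V E_{\sigma,q}\geq \sigma^2/(2a_2)>0$.
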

\begin{proof}
Assume by contradiction that $\inf_VE_{\sigma,q}=0$. Hence, there would
exist a sequence $(u_n,\A_n)_n\subset V$ such that
$E_{\sigma,q}(u_n,\A_n)\to 0$ as $n\to \infty$. Since both $I$ and
$K_q$ are nonnegative, from \eqref{92} we get
\[
I(u_n,\A_n)\to 0 \mbox{ and } \frac{1}{K_q(u_n)}\to 0 \mbox{ as }n\to
\infty.
\]
In particular,
\[
\int_{\R^3}(1-q\Phi_{u_n})u_n^2dx\to \infty \mbox{ as }n\to \infty,
\]
and thus, by \eqref{84},
\[
\int_{\R^3}u_n^2dx\to \infty \mbox{ as }n\to \infty,
\]
a contradiction to Lemma \ref{soprasotto}.
\end{proof}

The following result, which turns out to be a crucial one, is the
only point where assumption $W4)$ is used.
\begin{lemma}\label{lemma17}
There exists $\sigma_0>0$ such that there exists
$u_0\in \hat H^1$ with
\[
E_{\sigma_0,q}(u_0,0)<m\sigma_0.
\]
Moreover, if $q\leq 1$, then $\sigma_0$ depends only on $D$ and $m$,
while, if $q>1$, then $\sigma_0$ depends on $D$, $m$ and $q$.
\end{lemma}
\begin{proof}
Let us define
\[
v(x) := \begin{cases} 1 - \sqrt{(r-2)^2 + x_3^2  } , & (r-2)^2 + x_3^2 \leq 1,\\
0, & \mbox{elsewhere} .\end{cases}
\]
We define the set $\mathit{A}_{\lambda} := \{(r,x_3) \in \R^3 \
\mbox{s.t.} \ (r-2\la)^2 + x_3^2 \leq \lambda^2 \}$ and we compute
\begin{equation} \label{ala}
|\mathit{A}_{\lambda}| = \int_{\mathit{A}_{\lambda}}{dx_1dx_2dx_3}=
4\pi^2\lambda ^3=\la ^3 |\mathit{A}_1|.
\end{equation}
Of course, $v \in \hat{H}^1_r$ and, for a future need, we also
compute
\begin{equation} \label{v}
\begin{aligned}
&\int_{\R^3}{v^2 dx} = \int_{\mathit{A}_1} { \left( 1 - \sqrt{(r-2)^2 + x_3^2} \right)^2 dx_1 dx_2 dx_3} = \frac{2}{3}\pi^2, \\
&\int_{\R^3}{v dx} = \int_{\mathit{A}_1}{ \left( 1 - \sqrt{(r-2)^2 + x_3^2} \right) dx_1 dx_2 dx_3 } = \frac{4}{3}\pi^2, \\
&\int_{\R^3}{|\nabla v|^2}dx = \int_{\mathit{A}_1}{dx_1dx_2dx_3} =
4\pi^2.
\end{aligned}
\end{equation}
Moreover, for $\ve\in(0,\ve_0)$ and $\lambda \geq 1$ we define
\[
u_{\ve,\lambda}(x) = \ve^2\lambda v\left( \frac{x}{\lambda} \right).
\]
We also choose $\ve$ and $\lambda$ such that
\begin{equation}\label{epsila}
\ve\lambda\leq1,
\end{equation}
so that $0\leq u_{\ve,\lambda}\leq\ve<\ve_0$ in $\R^3$.

Then we have
\begin{equation} \label{ga}
\begin{aligned}
E_{\sigma_{\lambda},q}(u_{\ve,\lambda},0) &= \int_{\R^3}{\left[
\frac{1}{2} |\nabla u_{\ve,\lambda}|^2 + \frac{l^2}{r^2}
\frac{u_{\ve,\lambda}^2}{2} + W(u_{\ve,\lambda}) \right]dx  } +
\frac{\sigma^2}{2K_q(u_{\ve,\lambda})} \\
& = \frac{1}{2} \int_{\R^3}{ |\nabla u_{\ve,\lambda}|^2 } +
\frac{l^2}{2}\int_{\R^3}{ \frac{u_{\ve,\lambda}^2}{r^2}} +
\frac{m^2}{2}\int_{\R^3}{u_{\ve,\lambda}^2}\\
& +\int_{\R^3}N(u_{\ve,\lambda})\,dx+
\frac{\sigma^2}{2K_q(u_{\ve,\lambda})}.
\end{aligned}
\end{equation}
Now, observe that in $\mathit{A}_{\la}$ we have
\[
r \geq 2\la - \sqrt{\la^2 - x_3^2} \geq \la,
\]
so that, thanks to \eqref{ala}, we can estimate
\begin{equation}\label{polo}
\begin{aligned}
&\int_{\R^3}{\frac{u_{\ve,\lambda}^2}{r^2}dx_1dx_2dx_3} =
\ve^4\displaystyle{\int_{\mathit{A_{\lambda}}}{ \frac{ \left(
\lambda - \la\sqrt{\left( \dfrac{r}{\lambda} - 2 \right)^2 +
 \dfrac{x_3^2}{\lambda ^2}} \right)^2 }{r^2} drdx_3}} \\
&\leq \ve^4\int_{\mathit{A}_{\lambda}}{ \frac{\left( \lambda - \sqrt{(r - 2\lambda)^2 + x_3^2} \right)^2}{\la^2} drdx_3 } \\
&\leq \ve^4\int_{\mathit{A}_{\lambda}}{ \left( \frac{ \lambda -
\sqrt{(r - 2\lambda)^2 + x_3^2}}{\la} \right)^2 drdx_3} \leq \ve^4
|\mathit{A}_{\lambda}| = 4\pi^2\ve^4\lambda ^3.
\end{aligned}
\end{equation}

By the change of variable $y = x/ \la$ we immediately get
\[
\begin{aligned}
&\int_{\mathit{A}_{\la}}{ |\nabla u_{\ve,\lambda}|^2 dx} = \ve^4\la^3 \int_{\mathit{A}_1}{ |\nabla v|^2 dx},\\
& \int_{\mathit{A}_{\la}}{(u_{\ve,\lambda})^{\vartheta} dx} =
 \ve^{2\vartheta}\la^{\vartheta + 3}\int_{\mathit{A}_1}{v^{\vartheta} dx} \quad
\forall\,\theta>0.
\end{aligned}
\]
Therefore, \eqref{v}, \eqref{ga}, \eqref{polo} and $W4)$ imply
\begin{equation}\label{ga2}
\begin{aligned}
E_{\sigma,q}(u_{\ve,\lambda},0) &\leq 2\pi^2 \ve^4\la^3
+ \frac{m^2\pi^2}{3}\ve^4\la^5 + 2\pi^2l^2\ve^4\la^3\\
&-D\ve^{2\tau}\lambda^{\tau+3} \int_{\mathit{A}_1}{ v^\tau dx} +
\frac{\sigma^2}{2K_q(u_{\ve, \la})}.
\end{aligned}
\end{equation}

Now, let us note that
\[
-\Delta \Phi_{u_{\ve,\la}}=qu_{\ve,\la}^2(1-q\Phi_{u_{\ve,\la}})\leq
qu_{\ve,\la}^2,
\]
so that, by the Comparison Principle, for every $x\in\R^3$ we have
\begin{equation}\label{e4l4}
\Phi_{u_{\ve,\la}}(x)\leq
\frac{q}{4\pi}\int_{\R^3}\frac{u_{\ve,\la}^2(x-y)}{|y|}dy=\frac{q\ve^4
\la^5}{4\pi}\int_{\R^3}\frac{v^2(y)}{|x-\la y|}dy\leq
\frac{q}{2}\ve^4 \la^4.
\end{equation}
Indeed:
\[
\begin{aligned}
\int_{\R^3}\frac{v^2(y)}{|x-\la y|}dy&\leq \int_{A_1}\frac{1}{|x-\la
y|}dy=\frac{1}{\lambda^3}\int_{A_{1/\la}}\frac{1}{|x-z|}dz\\
&=\frac{1}{\lambda^3}\int_{A_{1/\la}-x}
\frac{1}{|z|}dz\leq\frac{1}{\lambda^3}
\int_{B(0,1/\la)}\frac{1}{|z|}dz=\frac{2\pi}{\la},
\end{aligned}
\]
and \eqref{e4l4} follows.

As a consequence,
\[
\begin{aligned}
K_q(u_{\ve, \la})&=\int_{\R^3}u_{\ve, \la}^2(1-q\Phi_{u_{\ve,
\la}})\,dx\geq \int_{\R^3}u_{\ve, \la}^2(1-\frac{q^2}{2}\ve^4 \la^4)\,dx\\
&=\frac{2}{3}\pi^2(1-\frac{q^2}{2}\ve^4 \la^4)\ve^4\lambda^5.
\end{aligned}
\]
Hence, choosing
\begin{equation}\label{secondac}
\ve^4\la^4\leq 1/q^2,
\end{equation}
\eqref{ga2} becomes
\[
\begin{aligned}
E_{\sigma,q}(u_{\ve,\lambda},0) &\leq 2\pi^2 \ve^4\la^3 +
\frac{m^2\pi^2}{3}\ve^4\la^5 +
2\pi^2l^2\ve^4\la^3\\
&-D\ve^{2\tau}\lambda^{\tau+3} \int_{\mathit{A}_1}{ v^\tau dx} +
\frac{3\sigma^2}{\pi^2\ve^4\lambda^5}.
\end{aligned}
\]

Now, take
\begin{equation}\label{ugual}
\ve^4\la^5=\frac{6\sigma}{m\pi^2},
\end{equation}
so that \eqref{secondac} implies
\begin{equation}\label{speriamo}
\lambda\geq\frac{6\sigma}{m\pi^2} q^2.
\end{equation}
With this choice we find
\[
E_{\sigma,q}(u_{\ve,\lambda},0)
\leq12\frac{\sigma}{m}(1+l^2)\la^{-2}+2m\sigma-E\la^{3-3\tau/2}+\frac{m\sigma}{2},
\]
where we have set $E=D(6\sigma/m\pi^2)^{\tau/2}\int v^\tau$.

Let us show that we can find $\la\geq \max\{1,3q^2\sigma/m\pi^2\}$
(and thus $\ve\leq 1$) satisfying \eqref{epsila} and
\eqref{secondac} such that
\[
12\frac{\sigma}{m}(1+l^2)\la^{-2}+\frac{5}{2}m\sigma-E\la^{3-3\tau/2}\leq
m\sigma,
\]
that is
\begin{equation}\label{magari}
\frac{12}{m}(1+l^2)+\frac{3}{2}m\la^2-F\la^{5-3\tau/2}\leq
0,
\end{equation}
where $F=D(6/m\pi^2)^{\tau/2} \sigma^{\tau/2-1} \int v^\tau $. Also
note that $5-3\tau/2<2$, since $\tau>2$.

Indeed, we choose
\begin{equation}\label{ugual2}
\lambda\geq \frac{\sqrt{8(1+l^2)}}{m},
\end{equation}
so that we can estimate the left hand side of \eqref{magari} with
\[
\frac{12}{m}(1+l^2)+\frac{3}{2}m\la^2-F\la^{5-3\tau/2}\leq
3m\lambda^2-F\lambda^{5-3\tau/2},
\]
and the last quantity is non positive as soon as
\begin{equation}\label{disug}
\lambda\leq \left(\frac{F}{3m}\right)^{2/3(\tau-2)}.
\end{equation}
Summing up, from \eqref{epsila}, \eqref{secondac}, \eqref{ugual},
\eqref{speriamo}, \eqref{ugual2} and \eqref{disug}, we are led to
solve the following set of conditions:
\begin{eqnarray}
\frac{6\sigma}{m\pi^2}\leq \lambda \label{prima}\\
\frac{6\sigma}{m\pi^2}q^2\leq \lambda\label{seconda}\\
\frac{\sqrt{8(1+l^2)}}{m}\leq \lambda\label{terza}\\
\lambda \leq\left(\frac{F}{3m}\right)^{2/3(\tau-2)}\label{quarta}.
\end{eqnarray}

Now, if $q\leq1$, \eqref{prima} implies \eqref{seconda}. Then,
choose $\sigma$ such that
\[
\frac{6\sigma}{m\pi^2}\geq \frac{\sqrt{8(1+l^2)}}{m},
\]
i.e.
\begin{equation}\label{bassa}
\sigma\geq \frac{\pi^2\sqrt{8(1+l^2)}}{6}.
\end{equation}
Hence, from \eqref{prima} and \eqref{quarta}, we must solve
\[
\frac{6\sigma}{m\pi^2}\leq \lambda\leq
\left(\frac{G}{3m}\right)^{2/3(\tau-2)}\sigma^{1/3},
\]
where $F=G\sigma^{\frac{\tau - 2}{2}}$, so that $G$
is independent of $\sigma$.

Of course, such a choice of $\lambda$ is possible
provided that
\begin{equation}\label{piubassa}
\begin{aligned}
\sigma& \leq \left(\frac{m^{\tau -4}\pi^{2\tau -6}}{6^{\tau-3}3} D\int_{\R^3}v^\tau dx
\right)^{1/(\tau-2)}\\
& \leq \left( \frac{m^{\tau -4}\pi^{2\tau -6}}{6^{\tau-3}3} D\int_{\R^3}v^2 dx
\right)^{1/(\tau-2)}
\end{aligned}
\end{equation}

In conclusion, \eqref{bassa}, \eqref{piubassa} and \eqref{v} imply
\[
\frac{\pi^2\sqrt{8(1+l^2)}}{6}\leq
cD^{1/(\tau-2)}m^{(\tau-4)/(\tau-2)},
\]
which is true by $W4)$.

On the other hand, if $q>1$, proceeding as above, we find a suitable
$\lambda$ provided that
\[
\frac{\pi^2\sqrt{8(1+l^2)}}{6}\leq
cD^{1/(\tau-2)}m^{(\tau-4)/(\tau-2)}\frac{1}{q^3}.
\]

In any case, the lemma holds.
\end{proof}

As a consequence, we can prove the following
\begin{lemma}\label{prop20}
There exists $c>0$ and a minimizing sequence $U_n = (u_n,\A_n)
\subset V$ of $E_{\sigma_0,q}|_V$ such that
\[
\int_{\R^3}{(|u_n|^\ell +|u_n|^p)dx} \geq c > 0 \ \text{for every
}n\in \N.
\]
\end{lemma}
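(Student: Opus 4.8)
The plan is to argue by contradiction, exploiting the threshold $m\sigma$ furnished by Lemma \ref{lemma17}. First I would record that the competitor $u_0$ built in Lemma \ref{lemma17} is cylindrically symmetric and paired with $\A=\bs 0\in\mathcal{A}$, so $(u_0,\bs 0)\in V$ and hence
\[
\inf_V E_{\sigma,q}\leq E_{\sigma,q}(u_0,\bs 0)<m\sigma .
\]
Consequently, for \emph{any} minimizing sequence $(u_n,\A_n)\subset V$ one has $E_{\sigma,q}(u_n,\A_n)\to \inf_V E_{\sigma,q}<m\sigma$, and it suffices to show that along such a sequence the quantity $\int_{\R^3}(|u_n|^\ell+|u_n|^p)\,dx$ cannot tend to $0$.

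The heart of the matter is a lower bound for $E_{\sigma,q}$ in terms of $K_q(u_n)$ alone. From $W3)$, the evenness of $N$ and $N(0)=0$ I would first deduce, by integrating $N'$, that
\[
|N(s)|\leq \frac{c_1}{\ell}|s|^\ell+\frac{c_2}{p}|s|^p\qquad\text{for all }s\in\R,
\]
so that $\big|\int_{\R^3}N(u_n)\,dx\big|\leq C\int_{\R^3}(|u_n|^\ell+|u_n|^p)\,dx$. Using $W(s)=\frac{m^2}{2}s^2+N(s)$ and discarding the nonnegative gradient, curl and magnetic terms in $I$, I obtain
\[
E_{\sigma,q}(u_n,\A_n)\geq \frac{m^2}{2}\int_{\R^3}u_n^2\,dx+\int_{\R^3}N(u_n)\,dx+\frac{\sigma^2}{2K_q(u_n)} .
\]
Since \eqref{84} gives $0\leq 1-q\Phi_{u_n}\leq 1$, we have $K_q(u_n)\leq\int_{\R^3}u_n^2\,dx$, whence the first term is at least $\frac{m^2}{2}K_q(u_n)$. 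The elementary inequality $\frac{m^2}{2}t+\frac{\sigma^2}{2t}\geq m\sigma$ for $t>0$ (applied at $t=K_q(u_n)>0$) then yields
\[
E_{\sigma,q}(u_n,\A_n)\geq m\sigma+\int_{\R^3}N(u_n)\,dx .
\]

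To conclude, suppose $\int_{\R^3}(|u_n|^\ell+|u_n|^p)\,dx\to 0$ along some subsequence. Then $\int_{\R^3}N(u_n)\,dx\to 0$ along that subsequence by the bound above, and the last displayed inequality forces $\liminf_n E_{\sigma,q}(u_n,\A_n)\geq m\sigma$, contradicting $E_{\sigma,q}(u_n,\A_n)\to\inf_V E_{\sigma,q}<m\sigma$. Hence $\liminf_n\int_{\R^3}(|u_n|^\ell+|u_n|^p)\,dx>0$, so there are $c>0$ and $N_0\in\N$ with $\int_{\R^3}(|u_n|^\ell+|u_n|^p)\,dx\geq c$ for all $n\geq N_0$; discarding the first $N_0$ terms (the tail is still minimizing) gives the asserted sequence. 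I do not expect a genuine obstacle here: the only points requiring care are checking $(u_0,\bs 0)\in V$ and the sign and evenness bookkeeping in the estimate for $N$, the real content being the interplay of the bound $K_q(u)\leq\int u^2$ (from \eqref{84}) with the AM--GM inequality that produces exactly the threshold $m\sigma$ of Lemma \ref{lemma17}.
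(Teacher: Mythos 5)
Your proof is correct and follows essentially the same route as the paper's: both use the threshold $m\sigma$ from Lemma \ref{lemma17}, discard the nonnegative terms of $I$, combine $K_q(u_n)\leq\int_{\R^3}u_n^2\,dx$ (from \eqref{84}) with the AM--GM inequality to produce exactly $m\sigma$, and then invoke the $W3)$ bound $|N(s)|\leq \frac{c_1}{\ell}|s|^\ell+\frac{c_2}{p}|s|^p$. The only cosmetic differences are that you argue by contradiction while the paper directly derives the quantitative bound $\int_{\R^3}N(u_n)\,dx\leq-\delta$, and you apply the estimate on $K_q$ to the mass term rather than to the charge term.
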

\begin{proof}
By Lemma \ref{lemma17} we know that there exists $\delta>0$ and
$n_0\in \N$ such that
\[
E_{\sigma_0,q}(u_n,\A_n)\leq m\sigma_0-\delta,
\]
which implies in particular that
\[
\frac{m^2}{2}\int_{\R^3}u_n^2dx+\int_{\R^3}
N(u_n)\,dx+\frac{\sigma_0^2}{2\int_{\R^3}u_n^2dx}\leq
m\sigma_0-\delta.
\]
Thus
\[
\int_{\R^3} N(u_n)\,dx\leq m\sigma_0-\delta
-\left(\frac{m^2}{2}\int_{\R^3}u_n^2dx
+\frac{\sigma^2}{2\int_{\R^3}u_n^2dx}\right)\leq -\delta,
\]
since $a/(2b)+b/(2a)\geq1$ for any $a,b>0$. Then
\[
\left|\int_{\R^3} N(u_n)\,dx\right|\geq \delta \quad \mbox{ for all
}n\geq n_0,
\]
and $W2)$ imply the claim, up to a relabelling of the sequence.
\end{proof}

By Lemma \ref{lemma18} we know that any minimizing sequence $U_n :=
(u_n,\A_n) \subset V$ of $E_{\sigma_0,q}|_V$ weakly converges (up to
a subsequence). Observe that $E_{\sigma_0,q}$ is invariant by
translations along the $z$-axis, namely for every $U \in V$ and $L
\in \R$ we have
\[
E_{\sigma_0,q}(T_LU) = E_{\sigma_0,q}(U),
\]
where
\begin{equation}\label{125}
T_L(U)(x,y,z)=U(x,y,z+L).
\end{equation}
As a consequence of this invariance, we have that $(u_n,\A_n)$ does
not contain in general a strongly convergent subsequence. To
overcome this difficulty, we will show that there exists a
minimizing sequence $(u_n,\A_n)$ of $E_{\sigma_0,q}|_V$ which, up to
translations along the $z$-direction, weakly converges to a
non--trivial limit $(u_0,\A_0)$. Eventually, we will show that
$(u_0,\A_0)$ is a critical point of $E_{\sigma,q}$ for a suitable
$\sigma>0$.

In order to proceed with this strategy, we start proving the
following weak compactness result, whose proof is an adaptation of
\cite[Proposition 22]{BF}, but whose statement is much more general:
\begin{prop} \label{prop22}
There exists a Palais--Smale sequence $U_n = (u_n,\A_n)$ of
$E_{\sigma_0,q}$ which weakly converges to $(u_0,\A_0), u_0 \geq 0$
and $u_0 \neq 0$.
\end{prop}
\begin{proof}
By Proposition \ref{prop21}, we know that there exists a minimizing
sequence $U_n= (u_n,\A_n)$ of $E_{\sigma_0,q}|_V$, with $u_n \geq 0$
and which is also a Palais--Smale sequence for $E_{\sigma_0,q}$.
Moreover, by Lemma \ref{prop20}, we know that there exists $c>0$
such that
\begin{equation} \label{130}
\|u_n\|^\ell_{L^\ell} + \|u_n\|^p_{L^p} \geq c >0 \ \mbox{for $n$ large}.
\end{equation}
By Lemma
\ref{lemma18} the sequence $\{ U_n \}$ is bounded in $\hat{H}^1
\times \left( \mathcal{D}^1 \right)^3$, so we can assume that it
weakly converges. However the weak limit could be trivial. We will
show that there is a sequence of integers $j_n$ such that $V_n :=
T_{j_n}U_n \rightharpoonup U_0 = (u_0,\A_0)$ in $H^1 \times \left(
\mathcal{D}^1 \right)^3$, with $u_0 \neq 0$, see \eqref{125}.

For any integer $j$ we set
\[
\Omega_j = \{ (x_1,x_2,x_3): j \leq x_3 < j +1 \}.
\]
In the following we denote by $c$ various positive absolute
constants which may vary also from line to line.
We have for all $n$,
\begin{equation} \label{131}
\begin{aligned}
\|u_n\|^\ell_{L^\ell} &= \displaystyle{ \sum_{j}\left( \int_{\Omega_j}{|u_n|^\ell dx} \right)^{1/\ell}
\left( \int_{\Omega_j} {|u_n|^\ell dx}  \right)^{\frac{\ell-1}{\ell}}} \\
& \leq \sup_{j}{\|u_n\|_{L^\ell(\Omega_j)}} \sum_{j}{\left( \int_{\Omega_j}{|u_n|^\ell dx }
\right)^{\frac{\ell-1}{\ell}}}   \\
&\displaystyle{ \leq c \sup_{j}{\|u_n\|_{L^\ell(\Omega_j)}}\sum_{j}{\|u_n\|^{\ell-1}_{H^1(\Omega_j)}}   } \\
&= c\sup_{j}{\|u_n\|_{L^\ell(\Omega_j)}}\|u_n\|^{\ell-1}_{H^1(\R^3)}
\leq (\mbox{since} \ \|u_n\|_{H^1(\R^3)} \ \mbox{is bounded}) \\
&\leq c \sup_{j}{\|u_n\|_{L^\ell(\Omega_j)}} \quad \mbox{ for all
}n\geq 1.
\end{aligned}
\end{equation}
In the same way we get
\begin{equation} \label{132}
\|u_n\|^p_{L^p} \leq c\sup_{j}{\|u_n\|_{L^p(\Omega_j)}} \quad
\mbox{ for all }n\geq 1.
\end{equation}
Then, by \eqref{130}, \eqref{131} and \eqref{132} it immediately
follows that, for $n$ large, we can choose an integer $j_n$ such
that
\begin{equation} \label{133}
\|u_n\|_{L^\ell(\Omega_{j_n})} + \|u_n\|_{L^p(\Omega_{j_n})} \geq c > 0.
\end{equation}
Now set
\[
\left( u'_n, \A_n' \right) = U'_n(x_1,x_2,x_3)= T_{j_n}(U_n)= U_n(x_1,x_2,x_3 + j_n).
\]
Since $(U_n')_n$ is again a minimizing sequence for
$E_{\sigma_0,q}|_V$, by Lemma \ref{lemma18} the sequence $\{ u'_n
\}$ is bounded in $\hat{H}^1(\R^3)$; then (up to a subsequence) it
weakly converges to $u_0 \in \hat{H}^1(\R^3)$. Clearly $u_0 \geq 0$,
since $u'_n \geq 0$. We want to show that $u_0 \neq 0$. Now, let
$\vp = \vp(x_3)$ be a nonnegative, $C^{\infty}$--function whose
value is $1$ for $0 < x_3 < 1$ and $0$ for $|x_3| > 2$. Then, the
sequence  $\vp u'_n$ is bounded in $H^1_0(\R^2 \times (-2,2))$, and
moreover $\vp u'_n$ has cylindrical symmetry. Then, using the
compactness result of Esteban--Lions \cite{23}, we have that, up to
a subsequence,
\begin{equation} \label{135}
\vp u'_n \rightarrow \vp u_0 \ \mbox{ in} \ L^\ell(\R^2 \times
(-2,2)), \ \mbox{ in} \ L^p(\R^2 \times (-2,2))\mbox{ and a.e. in
}\R^2 \times (-2,2).
\end{equation}

Moreover for $r=p,\ell$ we clearly have
\begin{equation} \label{136}
\|\vp u'_n\|_{L^r(\R^2 \times (-2,2))} \geq \|u'_n\|_{L^r(\Omega_0)} = \|u_n\|_{L^r(\Omega_{j_n})}.
\end{equation}
Then by \eqref{135}, \eqref{136} and \eqref{133} we have
\[
\|\vp u_0\|_{L^\ell(\R^2 \times (-2,2))} + \|\vp u_0\|_{L^p(\R^2 \times (-2,2))} \geq c > 0.
\]
Thus we have that $u_0 \neq 0$, as claimed.
\end{proof}

In order to approach the conclusion, we need
\begin{prop} [] \label{prop23}
For every $q>0$ there exists $\sigma> 0$ such that $E_{\sigma,q}$
has a critical point $(u_0,\A_0), u_0 \neq 0, u_0 \geq 0$.
\end{prop}
\begin{proof}
By Proposition \ref{prop22}, there
exists a sequence $U_n = (u_n, \A_n)$ in $V$, with $u_n \geq 0$ and
such that
\begin{equation} \label{137}
E'_{\sigma_0,q}(u_n,\A_n) \rightarrow 0
\end{equation}
and
\[
(u_n,\A_n) \rightharpoonup (u_0,\A_0) \ , u_0\geq 0, \, u_0\neq 0.
\]

We now show that there exists $\sigma> 0$ such that $U_0=
(u_0,\A_0)$ is a critical point of $E_{\sigma,q}$.

By \eqref{137}, in particular we get that
\[
dE_{\sigma_0,q}(U_n)[w,0] \rightarrow 0 \ \ \mbox{and} \ \
dE_{\sigma_0,q}(U_n)[0,\mathbf{w}] \rightarrow 0 \ \mbox{, for any}
\ (w,\mathbf{w}) \in \hat{H}^1 \times \left(C^\infty_C \right)^3.
\]
Then for any $w \in \hat{H}^1$ and $\mathbf{w} \in  \left(C^\infty_C \right)^3$ we have
\begin{equation} \label{138}
\partial_uI(U_n)[w] + \partial_u \left( \frac{\sigma_0^2}{2K_q(u_n)} \right)[w]
\rightarrow 0
\end{equation}
and
\begin{equation} \label{139}
\partial_{\A}I(U_n)[\mathbf{w}] \rightarrow 0,
\end{equation}
where $\partial_u$ and $\partial_{\A}$ denote the partial
derivatives of $I$ with respect to $u$ and $\A$, respectively. So
from \eqref{138} we get for any $w \in \hat{H}^1$,
\[
\partial_uI(U_n)[w] - \frac{\sigma_0^2K'_q(u_n)}{2 \left( K_q(u_n)
\right)^2}[w] \rightarrow 0,
\]
which can be written as follows:
\begin{equation} \label{140}
\partial_uI(U_n)[w] - \frac{\omega_n^2 K'_q(u_n)}{2}[w] \rightarrow 0,
\end{equation}
where we have set
\[
\omega_n = \frac{\sigma_0}{K_q(u_n)}.
\]
By Lemma \ref{soprasotto} we have that (up to a subsequence)
\[
\omega_n \rightarrow \omega_0 > 0.
\]
Then by \eqref{140} we get for any $w \in \hat{H}^1$
\begin{equation} \label{141}
\partial_uI(U_n)[w] - \frac{\omega_0^2 K'_q(u_n)}{2}[w] \rightarrow 0.
\end{equation}
Now, let $\Phi_n$ be the solution in $\mathcal{D}^1$ of the equation
\begin{equation} \label{142}
- \Delta \Phi_n + q^2 u_n^2\Phi_n = q u_n^2.
\end{equation}
Since $\{ u_n \}$ is bounded in $H^1$ and since $\Phi_n$ solves
\eqref{142}, by \eqref{stima} we have that $\{ \Phi_n \}$ is bounded
in $\mathcal{D}^1$ and, checking with test functions in
$C^\infty_C(\R^3)$, it is easy to see that (up to a subsequence) its
weak limit $\Phi_0$ is a weak solution of
\begin{equation} \label{143}
- \Delta \Phi_0 + q^2 u_0^2\Phi_0 = q u_0^2.
\end{equation}
Moreover, by Lemma \ref{lemma13}, we have
\begin{equation} \label{144}
K'_q(u_n)[w] = 2\int_{\R^3}u_nw(1-q\Phi_n)^2 dx\ \ \mbox{and} \ \ K'_q(u_0)[w]= 2\int_{\R^3} u_0w(1-q\Phi_0)^2dx
\end{equation}
for every $w\in \hat H^1$.

We claim that
\begin{equation} \label{146}
K'_q(u_n)[w] \rightarrow K'_q(u_0)[w] \mbox{  for any $w \in
\hat{H}^1$}.
\end{equation}
Indeed, by \eqref{denso}, for any $w\in \hat H^1$ and every $\ve>0$,
there exists $w_\ve\in C^\infty_C\cap \hat H^1$ such that
$\|w-w_\ve\|_{\hat H^1}<\ve$. Then,
\[
\begin{aligned}
K'_q(u_n)[w]-K'_q(u_0)[w]&=K'_q(u_n)[w-w_\ve]\\
&+[K'_q(u_n)-K'_q(u_0)][w_\ve]-K'_q(u_0)[w_\ve-w].
\end{aligned}
\]
But the sequence of operators $(K'(u_n))_n$ is bounded in $(\hat
H^1)'$, while $[K'_q(u_n)-K'_q(u_0)][w_\ve]\to 0$ by the Rellich
Theorem. The claim follows.

Similar estimates show that for any $w \in \hat{H}^1$
\begin{equation} \label{147}
\partial_uI(U_n)[w] \rightarrow \partial_uI(U_0)[w].
\end{equation}
Then, passing to the limit in \eqref{141}, by \eqref{146} and
\eqref{147}, we get
\begin{equation} \label{148}
\partial_uI(U_0)[w] - \frac{\omega_0^2 K'_q(u_0)}{2}[w] = 0 \ \mbox{for
any} \ w \in \hat{H}^1.
\end{equation}
On the other hand, similar arguments show that we can pass to the
limit also in $\partial_{\A}I(U_n)[\mathbf{w}]$ and have
\begin{equation} \label{149}
\partial_{\A}I(U_n)[\mathbf{w}] \rightarrow \partial_{\A}I(U_0)[\mathbf{w}] \mbox{ for all } \ \mathbf{w} \in  \left(C^\infty_C \right)^3.
\end{equation}
From \eqref{139} and \eqref{149} we get
\[
\partial_{\A}I(U_0)[\mathbf{w}] = 0 \ \mbox{for all} \ \mathbf{w} \in
 \left(C^\infty_C \right)^3,
\]
and, by density, for any $ \mathbf{w} \in
 \left(\mathcal{D}^1 \right)^3$. From \eqref{148} we thus deduce that $U_0 = (u_0,\A_0)$
is a critical point of $E_{\sigma,q}$ with $\sigma= \omega_0K_q(u_0)
> 0$.
\end{proof}

Now we are ready to prove the main existence Theorem \ref{main}.
\begin{proof}[Proof of Theorem $\ref{main}$]
The first part of Theorem \ref{main} immediately follows from
Propositions  \ref{prop14}, \ref{prop23} and Theorem \ref{th10}. In
fact, if the couple $(u_0, \A_0)$ is like in Proposition
\ref{prop23}, by Proposition \ref{prop14} and Theorem \ref{th10} we
deduce that $(u_0,\omega_0,\phi_0,\A_0)$ with $\omega_0 =
\frac{\sigma}{K_q(u_0)}, \phi_0 = Z_{\omega_0}(u_0)$, solves
\eqref{39a}--\eqref{40a}--\eqref{41a}.

Now assume $q=0$, then, by \eqref{40a} and \eqref{41a}, we easily
deduce that $\phi_0 = 0$ and $\A_0 = 0$. Finally assume that $q >
0$. Then, since $\omega_0 > 0$, by \eqref{40a} we deduce that
$\phi_0 \neq 0$. Moreover by \eqref{41a} we deduce that $\A_0 \neq
0$ if and only if $l \neq 0$.
\end{proof}

\section{Solutions with full probability}\label{secfissa}

Throughout this section we are concerned with a different approach
to system \eqref{39a}--\eqref{40a}--\eqref{41a}: namely, we look for
solutions having full probability and we prove Proposition
\ref{vincolo}. From a physical point of view such solutions are the
most relevant ones, and in general they cannot be obtained from the
solutions found in Theorem \ref{main} by a rescaling argument,
unless some homogeneity in the potential is given. However, this is not
the case if $N\neq 0$.

Therefore, we will work in the new manifold $\tilde V := V \cap
\mathcal{S}$, where
\[
\mathcal{S}=\Big\{(u,\A) \in V\,:\, \int_{\R^3}u^2 dx =1\Big\}.
\]

We follow the lines of the previous part of the paper, and for this
reason we will be sketchy, though some differences will appear. For
example, we begin with the following
\begin{prop} \label{prop21new}
For any $\sigma, q \geq 0$ there exists a minimizing sequence $U_n =
(u_n,\A_n)$ of $E_{\sigma,q}|_{\tilde V}$, with $u_n \geq 0$, and a
sequence $(\mu_n)_n \in \R$,
such that
\[
E'_{\sigma,q}(u_n,\A_n)(v,\bs{B}) - \mu_n\int_{\R^3}u_nv\,dx
\rightarrow 0 \quad \forall \, (v,\bs{B})\in \tilde{V}.
\]
Moreover, $(\mu_n)_n$ converges to some  $\mu \in \R$ as $n \rightarrow \infty$.
\end{prop}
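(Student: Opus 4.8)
The plan is to argue exactly as in Proposition~\ref{prop21}, the only genuinely new feature being the Lagrange multiplier $\mu_n$ produced by the constraint $\mathcal{S}$. First I would fix a minimizing sequence $(u_n,\A_n)\subset\tilde V$ for $E_{\sigma,q}|_{\tilde V}$. Since $\Phi_u$ depends on $u$ only through $u^2$ (see \eqref{82}), replacing $u_n$ by $|u_n|$ leaves $\int_{\R^3}u_n^2\,dx=1$, the membership in $\hat{H}^1_\sharp$ and the value of $E_{\sigma,q}$ all unchanged; hence it is not restrictive to assume $u_n\geq 0$. Because $\tilde V=V\cap\mathcal S$ is a complete $C^1$-manifold and $E_{\sigma,q}$ is of class $C^1$ and bounded below on it (all its terms being nonnegative), Ekeland's Variational Principle \cite{ekel} yields, after relabelling, that the minimizing sequence is also a Palais--Smale sequence for the \emph{restricted} functional, i.e. the differential of $E_{\sigma,q}|_{\tilde V}$ at $(u_n,\A_n)$ tends to $0$ in the cotangent space.

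Next I would translate this restricted Palais--Smale condition into the stated formula. The constraint $\mathcal S$ is the zero set of $G(u)=\int_{\R^3}u^2\,dx-1$, whose differential $G'(u)[v]=2\int_{\R^3}uv\,dx$ is nonzero at $u_n$ (since $\int_{\R^3}u_n^2\,dx=1$); the further constraint defining $V$ is instead a \emph{natural} constraint for $E_{\sigma,q}$ and can be freed exactly as in Proposition~\ref{prop21} via the technique of \cite[Theorem~16]{11}. Concretely, for any $(v,\bs B)\in V$ I would split
\[
v=v^\top+t_n u_n,\qquad t_n=\int_{\R^3}u_nv\,dx,\qquad v^\top=v-t_nu_n,
\]
so that $\int_{\R^3}u_nv^\top\,dx=0$, i.e. $(v^\top,\bs B)$ is tangent to $\tilde V$. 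Setting $\mu_n:=E'_{\sigma,q}(u_n,\A_n)(u_n,0)$ and using the restricted Palais--Smale condition along the tangential direction $(v^\top,\bs B)$, I obtain
\[
E'_{\sigma,q}(u_n,\A_n)(v,\bs B)-\mu_n\int_{\R^3}u_nv\,dx=E'_{\sigma,q}(u_n,\A_n)(v^\top,\bs B)=o(1)\,\|(v,\bs B)\|_V
\]
uniformly on bounded sets (here $\|(v^\top,\bs B)\|_V\leq C\|(v,\bs B)\|_V$ because $|t_n|\leq\|v\|_2$ and $\|u_n\|_{\hat{H}^1}$ is bounded), which is the desired relation.

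It then remains to show that $(\mu_n)_n$ is bounded. Writing $E_{\sigma,q}=I+\tfrac{\sigma^2}{2K_q}$ as in \eqref{92} and differentiating,
\[
\mu_n=\int_{\R^3}|\nabla u_n|^2\,dx+\int_{\R^3}|l\nabla\theta-q\A_n|^2u_n^2\,dx+\int_{\R^3}W'(u_n)u_n\,dx-\frac{\sigma^2}{K_q(u_n)^2}\int_{\R^3}(1-q\Phi_{u_n})^2u_n^2\,dx,
\]
where the last term uses Lemma~\ref{lemma13}. I would bound each piece: the first two are controlled by the boundedness of the minimizing sequence (the analogue of Lemma~\ref{lemma18}, the $L^2$ bound now being automatic from the constraint); by $W3)$ one has $\int_{\R^3}|W'(u_n)u_n|\,dx\leq m^2+\int_{\R^3}(c_1|u_n|^\ell+c_2|u_n|^p)\,dx$, bounded through the Sobolev embeddings \eqref{imme} since $2<\ell\leq p<6$; finally, $0\leq 1-q\Phi_{u_n}\leq 1$ by \eqref{84} gives $\int_{\R^3}(1-q\Phi_{u_n})^2u_n^2\,dx\leq 1$, while $E_{\sigma,q}(u_n,\A_n)\geq\tfrac{\sigma^2}{2K_q(u_n)}$ forces $K_q(u_n)\geq c_0>0$ for large $n$, so the last term is bounded too (the case $\sigma=0$ being trivial). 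Hence $\mu_n$ is bounded and, up to a subsequence, $\mu_n\to\mu\in\R$. The main obstacle I anticipate is precisely this last step: securing a uniform \emph{positive} lower bound on $K_q(u_n)$ so that the multiplier term stays bounded, while simultaneously handling the coexistence of the genuine constraint $\mathcal S$ with the natural constraint $V$ without generating a spurious second multiplier.
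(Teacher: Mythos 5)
Your proof is correct and follows essentially the same route as the paper: Ekeland's Variational Principle on $\tilde V$, interpretation of the restricted Palais--Smale condition as a Lagrange multiplier identity, and boundedness of $(\mu_n)_n$ obtained by expressing $\mu_n$ through the differential tested in the direction of $u_n$ and bounding each resulting term via the a priori bounds on the minimizing sequence and \eqref{84}. The only (harmless) differences are that you make the multiplier construction explicit via a tangential projection, and you test with $(u_n,\bs{0})$ where the paper tests with $(u_n,\A_n)$.
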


\begin{proof}
Let $(u_n,\A_n) \subset V$ be a minimizing sequence for
$E_{\sigma,q}|_{\tilde V}$. Working with $u_n \geq
0$, or replacing $u_n$ with $|u_n|$ if necessary, we still have a
minimizing sequence (see \eqref{90}). By Ekeland's
Variational Principle we can also
assume that $(u_n,\A_n)$ is a Palais--Smale sequence for
$E_{\sigma,q}|_{\tilde V}$, namely we can assume that
\[
E'_{\sigma,q}|_{\tilde V}(u_n,\A_n) \rightarrow 0,
\]
i.e. there exists a sequence $(\mu_n)_n \in \R$ with
\begin{equation}\label{miserve}
E'_{\sigma,q}(u_n,\A_n)(v,\bs{B}) - \mu_n \int_{\R^3}u_n v dx
\rightarrow 0, \ \forall \ v \in {\hat H}^1_\sharp, \ \forall \,
\bs{B} \in \mathcal{A}.
\end{equation}
Taking $(u_n,\A_n)$ as a test function and using $\int_{\R^3}u_n^2dx
= 1$ for all $n\in \N$, we get
\begin{equation} \label{new}
E'_{\sigma,q}(u_n,\A_n)(u_n,\A_n) - \mu_n \int_{\R^3}u_n^2dx =
E'_{\sigma,q}(u_n,\A_n)(u_n,\A_n) - \mu_n  \rightarrow 0.
\end{equation}
From \eqref{new} we get
\begin{equation} \label{new2}
\begin{aligned}
\mu_n &=E'_{\sigma,q}(u_n,\A_n)(u_n,\A_n)+o(1)  \\
 &=\int_{\R^3}
|\nabla u_n|^2 dx + \int_{\R^3} |l\nabla \theta - q \A_n|^2u_n^2 dx
+ \int_{\R^3} W'(u_n)u_n dx \\  &+\int_{\R^3} |\curl \A_n|^2  dx +
q\int_{\R^3}u_n^2|\A_n|^2 dx + \sigma K'_q (u_n)u_n dx+o(1),
\end{aligned}
\end{equation}
where $o(1)\to 0$ as $n\to \infty$. Thus, since all the terms in the
right-hand-side of \eqref{new2} are bounded, as already shown for
Lemma \ref{lemma18}, we get that also $(\mu _n)_n$ is bounded;
hence, there exists $\mu\in \R$ such that, up to a subsequence,
$\mu_n\to\mu$ as $n\to \infty$.
\end{proof}

Now we restate Proposition \ref{prop22} which still holds in this
case thanks to Proposition \ref{prop21new}, hence we get
\begin{prop} \label{prop22new}
There exists a Palais--Smale sequence $U_n = (u_n,\A_n)$ of
$E_{\sigma_0,q}$ which weakly converges to $(u_0,\A_0), u_0 \geq 0$
and $u_0 \neq 0$.
\end{prop}

In order to prove Proposition \ref{vincolo} we should just notice
that the analogue of Proposition \ref{prop23} still holds using
Proposition \ref{prop21new} and Proposition \ref{prop22new}. Hence,
we just restate the result of Proposition \ref{prop23} as follows:
\begin{prop} \label{prop23new}
For every $q>0$ there exists $\sigma> 0$ such that $E_{\sigma,q}$
has a critical point $(u_0,\A_0), u_0 \neq 0, u_0 \geq 0$.
\end{prop}

Finally, we conclude with the
\begin{proof}[Proof of Proposition $\ref{vincolo}$]
It is a natural consequence of what already proved, exactly as done
for the proof of Theorem \ref{main} in the previous section. Namely,
since Proposition \ref{prop23new} holds by Proposition
\ref{prop21new} and Proposition \ref{prop22new}, we can conclude
that our claim is true thanks to Propositions \ref{prop23new},
\ref{prop14} and Theorem \ref{th10}.

Now, suppose that $\omega^2 \leq m^2$ and $ N'(s)s \geq 0$. Passing
to the limit as $n \rightarrow \infty$ in \eqref{miserve} with
$v=u_0$ and $\bs{B}=\bs0$, as in the proof of Proposition
\ref{prop23}, we get
\[
\int_{\R^3}[ |\nabla u_0|^2 + |l\nabla \theta - q\A_0|^2u_0^2 -
(\omega - q\phi_{u_0})^2u_0^2 + W'(u_0)u_0] dx = \mu
\int_{\R^3}u_0^2 dx,
\]
which can be written as
\[
\begin{aligned}
&\int_{\R^3}[ |\nabla u_0|^2 + |l\nabla \theta - q\A|^2u^2_0 + (m^2
- \omega^2)u^2_0 - (q\phi - 2\omega)u^2_0q\phi_{u_0}\\
& + N'(u_0)u_0] dx = \mu\int_{\R^3}u_0^2 dx.
\end{aligned}
\]
Thanks to \eqref{83}, \eqref{84} and to the hypotheses under
consideration, we get $\mu >0$, so that the effective mass (see
Definition \ref{massa}) is strictly less than the original mass.
\end{proof}

\section{Non-existence of standing solutions}
In this section we shall prove Theorem \ref{senzanome}. To this
purpose, we re--write the usual system using \eqref{n1a}, so that we
deal with
\begin{align}
&-\Delta u + \left[ |l\nabla \theta - q\A|^2 + m^2 - (\omega - q \phi)^2 \right ]u + N'(u) = 0, \label{ne1} \\
&-\Delta \phi = q(\omega - q\phi)u^2, \label{ne2} \\
&\curl (\curl \A) = q(l\nabla \theta - q\A)u^2. \label{ne3}
\end{align}

\begin{proof}[Proof of Theorem $\ref{senzanome}$]
If $\bs{A}=\bs{0}$, in \cite{tdnonex} a variational identity for
solutions of \eqref{ne1} was given. However, the same identity holds
when $\A\neq \bs{0}$, and it reads as follows:
\begin{equation}\label{v1}
\begin{aligned}
0=&-\int_{\R^3} |\nabla u|^2 dx + \int_{\R^3} |\nabla \phi|^2 dx -
3\Omega \int_{\R^3}u^2 dx \\
&- 3q\int_{\R^3}(2\omega -q\phi)\phi u^2 dx + 6\int_{\R^3}F(u) dx,
\end{aligned}
\end{equation}
where we have set $\Omega = m^2 - \omega ^2$,
$F(s)=\int_0^sf(t)\,dt$ and
\[
f(u)=-|l \nabla \theta-q\A|^2u-N'(u).
\]

Since $\phi$ solves \eqref{ne2}, we have
\begin{equation}\label{v2}
\int_{\R^3} |\nabla \phi|^2 dx = q \int_{\R^3}(\omega -q\phi)u^2
\phi dx;
\end{equation}
substituting \eqref{v2} into \eqref{v1} and computing $F(u)$ we get
\begin{equation} \label{v3}
\begin{aligned}
0 =&-\int_{\R^3} |\nabla u|^2 dx-\int_{\R^3}\left[ 3\Omega+5q\omega
\phi-2q^2\phi^2+3 |l\nabla\theta - q\A|^2
\right]u^2dx\\
&-6\int_{\R^3}N(u)\,dx.
\end{aligned}
\end{equation} By
\eqref{83} and \eqref{84}, if $N\geq 0$ and  $\omega^2 < m^2$, we
get $u\equiv 0$.

Moreover, since $u$ solves \eqref{ne1}, we have
\begin{equation} \label{ne4}
\int_{\R^3}|\nabla u|^2 dx + \int_{\R^3} |l\nabla\theta - q\A|^2u^2
dx + m^2\int_{\R^3}u^2 dx - \int_{\R^3}(\omega - q\phi)^2u^2 +
\int_{\R^3}N'(u)u dx =0;
\end{equation}
substituting the expression $\int_{\R^3}|\nabla u|^2dx$ taken from
\eqref{ne4} into \eqref{v3}, we obtain
\begin{equation} \label{ne5}
\begin{aligned}
0 =& q\int_{\R^3}(q\phi-3\omega) u^2 \phi\, dx -
2\int_{\R^3}|l\nabla\theta - q\A|^2u^2 dx \\
&+ 2(\omega^2 -m^2)\int_{\R^3}u^2 dx+ \int_{\R^3}[N'(u)u - 6N(u)]dx.
\end{aligned}
\end{equation}
Thanks to \eqref{83} and \eqref{84}, all the terms in \eqref{ne5}
are non--positive if $\omega^2<m^2$, $N'(s)s - 6N(s) \leq 0$; hence
$u\equiv 0$.

Finally, when $N'(s)s\geq 2N(s)$, we proceed as follows: from
\eqref{ne4} we get
\begin{equation} \label{ne6}
\begin{aligned}
\Omega\int_{\R^3}u^2 dx = &- \int_{\R^3}|\nabla u|^2 dx -
\int_{\R^3} |l\theta - q\A|^2u^2 dx\\
&- 2q\omega \int_{\R^3}u^2 \phi
dx + q^2\int_{\R^3} u^2 \phi^2 dx - \int_{\R^3} N'(u)u\, dx.
\end{aligned}
\end{equation}
Substituting \eqref{ne6} into \eqref{v3} we get
\begin{equation} \label{ne7}
0 = 2\int_{\R^3}|\nabla u|^2 dx + \int_{\R^3} qu^2\phi (\omega - q
\phi) dx + \int_{\R^3} [3N'(u)u - 6N(u)]\, dx.
\end{equation}
Analogously, now all the coefficients are non--negative, and thus
$u\equiv0$.
\end{proof}

\end{document}